\newtheorem{theorem}{Theorem}[section]
\newtheorem{corollary}[theorem]{Corollary}
\newtheorem{conjecture}[theorem]{Conjecture}
\numberwithin{equation}{section}
\numberwithin{figure}{section}
\newcommand{\bna}{\begin{eqnarray}}
\newcommand{\ena}{\end{eqnarray}}
\newcommand{\ba}{\begin{eqnarray*}}
\newcommand{\ea}{\end{eqnarray*}}
\newcommand{\bs}[1]{}
\newtheorem{lemma}[theorem]{Lemma}
\newtheorem{remark}{Remark}
\begin{document}
\title{Maximally Dense Disk Packings on the Plane}
\author{Robert Connelly\thanks{Department of Mathematics, Cornell University. \texttt{rc46@cornell.edu} Partially supported by 
NSF grant DMS-1564493.}\, and Maurice Pierre\thanks{Columbia University. \texttt{mp3830@columbia.edu} Partially supported by 
NSF grant DMS-1564493.}}
\date{\today}

\maketitle

\begin{abstract}  Suppose one has a collection of disks of various sizes with disjoint interiors, a packing, in the plane, and suppose the ratio of the smallest radius divided by the largest radius lies between $1$ and $q$. In his 1964 book \textit{Regular Figures} \cite{MR0165423},  L\'aszl\'o Fejes T\'oth  found a series of packings that were his best guesses for the maximum density for any $1> q > 0.2$.  Meanwhile Gerd Blind in \cite{MR0377702, MR0275291} proved that for $1\ge q > 0.74$, the most dense packing possible is $\pi/\sqrt{12}$, which is when all the disks are the same size.  In \cite{MR0165423}, the upper bound of the ratio $q$ such that the density of his packings greater than $\pi/\sqrt{12}$ that  Fejes T\'oth found was $0.6457072159..$.  Here we improve that upper bound to $0.6585340820..$.  Both bounds were obtained by perturbing a packing that has the property that the graph of the packing is a triangulation, which L. Fejes T\'oth called a \emph{compact} packing, and we call a  \emph{triangulated} packing.  Previously all of L. Fejes T\'oth's packings that had a density greater than $\pi/\sqrt{12}$ and $q > 0.35$ were based on perturbations of packings with just two sizes of disks, where the graphs of the packings were triangulations.  Our new packings are based on a triangulated packing that has three distinct sizes of disks, found by Fernique, Hashemi, and Sizova, \cite{MR4292755},  which is something of a surprise.

\noindent {\bf Keywords: Compact packing, triangulated packing, disk packing, density, symmetry, orbifold.}

\end{abstract}

\section{Introduction}
A disk packing is called \emph{compact} or \emph{triangulated} if its contact graph is triangulated, i.e. the graph formed by connecting the centers of every adjacent disk consists only of triangular faces. We are interested in packings on the flat torus, which are equivalent to doubly periodic packings in the plane. The problem is to find and classify all compact packings of order $n$ on the torus, meaning the packing uses $n$ different sized disks. For $n=1$, only a single triangulated packing exists, the hexagonal lattice, where each disk touches six others of the same size. For $n=2$, nine triangulated packings exist, found by Kennedy \cite{MR2195054}. For $n=3$, there are 164 triangulated packings, found by Fernique, Hashemi, and Sizova    \cite{MR4292755}. (This is much less than the upper bound of 11,462 packings given by Messerschmidt \cite{MR4033125}.)

 In 1890, Thue gave the first proof that the hexagonal lattice is the densest single size disk packing. However, some considered his proof to be incomplete. In 1940, L. Fejes T\'oth provided the first rigorous proof. This caused Fejes T\'oth to consider the densest possible packings with multiple disk sizes. The density of any such packing, if triangulated, must be strictly greater than $\pi/\sqrt{12}$, which is the density of the hexagonal lattice. Here, in Theorem \ref{thm:triangle-pack-min} we provide a simple proof.

For any disk packing, let $0<q\leq1$ be the ratio between the radii of the smallest and largest circles used. Florian derived a formula for an upper bound for the density of a packing depending on its value of $q$:

\begin{equation}\label{eqn:Florian}
s(q)=\frac{\pi q^2+2(1-q^2)\sin^{-1}\big(\frac{q}{1+q}\big)}{2q\sqrt{1+2q}}.
\end{equation}

\begin{theorem}[Florian \cite{MR0155235}]\label{thm:Florian}
If $\delta$ is the density of a  packing in the plane with radii between $1$ and $q$, then $\delta \le s(q)$.
\end{theorem}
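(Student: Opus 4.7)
The plan is to bound the density locally on triangles and then average. I would begin by fixing a Delaunay-type triangulation $\mathcal{T}$ of the plane whose vertices are the centers of the packing disks; this is well-defined for any locally finite planar packing and tiles the plane up to a boundary contribution that vanishes in the infinite-packing limit. The global density $\delta$ equals the area-weighted average over $T \in \mathcal{T}$ of the local densities $D(T) := (\text{disk area in } T)/\mathrm{Area}(T)$, so it suffices to prove $D(T) \le s(q)$ for every admissible triangle $T$.

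Next, I would analyze a single $T$ with vertices $c_1,c_2,c_3$ at the centers of disks of radii $r_1,r_2,r_3 \in [q,1]$. When each disk fits into the corner of $T$ at its vertex (i.e.\ does not cross the opposite edge; the remaining cases can be handled by a standard sector-decomposition adjustment) the portion of disk $i$ lying inside $T$ is a circular sector at $c_i$ of angle $\alpha_i$, where $\alpha_i$ is the interior angle of $T$ at $c_i$. Hence
\begin{equation*}
D(T) \;=\; \frac{\tfrac12\bigl(\alpha_1 r_1^2+\alpha_2 r_2^2+\alpha_3 r_3^2\bigr)}{\mathrm{Area}(T)}.
\end{equation*}
The non-overlap hypothesis forces $|c_i-c_j| \ge r_i+r_j$, and I would argue that $D(T)$ is maximized precisely when all three of these inequalities are tight: shrinking a side keeping the radii fixed decreases $\mathrm{Area}(T)$ faster than it decreases the angle-weighted numerator, since the angles are bounded above by $\pi$ and the coefficients $r_i^2$ are fixed. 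With the three disks mutually tangent, the triangle is determined entirely by $(r_1,r_2,r_3)$, and its angles and area come from the law of cosines and Heron's formula.

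What remains is to maximize the resulting explicit function of three variables over $[q,1]^3$. By symmetry I may assume $r_1\le r_2\le r_3$, and the target — to match Florian's formula — is that the maximum occurs at the corner $r_1=r_2=q$, $r_3=1$. At that configuration the triangle has sides $2q,\,1+q,\,1+q$, area $q\sqrt{1+2q}$, large-disk apex angle $2\sin^{-1}\!\bigl(\tfrac{q}{1+q}\bigr)$, and two base angles each equal to $\tfrac{\pi}{2}-\sin^{-1}\!\bigl(\tfrac{q}{1+q}\bigr)$. Substituting these into the sector formula and simplifying recovers exactly the right-hand side of~(\ref{eqn:Florian}).

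The main obstacle will be this last optimization. The function $D(r_1,r_2,r_3)$ is not obviously monotone in each $r_i$, since raising $r_i$ both enlarges the sector disk $i$ contributes and, through the tangency constraint, enlarges the triangle. The key technical step is to show that $\partial D/\partial r_i$ has a consistent sign depending on whether $r_i$ is the largest or smallest of the three radii, which rules out interior critical points and forces the maximum onto the boundary of the cube, and ultimately to one of the two candidate corners $(q,q,1)$ or $(q,1,1)$; a direct comparison eliminates the latter. This calculus step is the technical heart of Florian's argument; the remaining ingredients (Delaunay averaging, sector identification, and the closed-form angle and area computation at the extremal triangle) are routine.
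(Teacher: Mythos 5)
First, a point of orientation: the paper does not prove Theorem \ref{thm:Florian} at all --- it is quoted from Florian \cite{MR0155235} and used as a black box. The closest the paper comes to the content of your argument is Section \ref{subsect:comments} and the inequality chain (\ref{eqn:density-monotone}), where the monotonicity statements you need are supported only by pointing at Figures \ref{fig:comparison-intermediate} and \ref{fig:ratios}, i.e.\ numerically, not by proof. So there is no in-paper proof to match; your proposal has to stand on its own, and as written it is an outline with the two hardest steps asserted rather than carried out.

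The concrete gaps are these. (i) Your reduction to mutually tangent triangles is justified by the claim that shrinking a side ``decreases $\mathrm{Area}(T)$ faster than it decreases the angle-weighted numerator''; but shrinking the side opposite the \emph{largest} disk transfers angle from the coefficient $r_{\max}^2$ to the smaller $r_i^2$, so the numerator genuinely decreases too, and the sign of the net effect is exactly what has to be proved --- the heuristic offered does not decide it. (ii) The optimization of $\delta_r(r_1,r_2,r_3)$ over $[q,1]^3$, with the conclusion that the maximum sits at $(q,q,1)$, is the technical heart of the theorem, and you explicitly defer it (``the key technical step is to show\ldots''). It is also delicate: the paper's Figure \ref{fig:ratios} shows $\delta_r(1,r,r)$ and $\delta_r(1,1,r)$ differ by less than $0.4\%$, so the ``direct comparison'' eliminating the corner $(q,1,1)$ is not a soft estimate, and Figure \ref{fig:comparison-intermediate} shows the density is not monotone in the intermediate radius (both endpoints are local maxima), which rules out a naive one-variable monotonicity argument. (iii) The Delaunay bookkeeping needs more care than ``a standard sector-decomposition adjustment'': the identity expressing the global density as a weighted average of per-triangle sector sums is fine (angles at each center sum to $2\pi$), but the per-triangle bound must then hold for \emph{every} triangle satisfying only $|c_i-c_j|\ge r_i+r_j$, including obtuse and near-degenerate ones where sectors cross the opposite edge; handling these is a substantial part of Florian's actual argument. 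On the positive side, your identification of the extremal configuration is correct and your closed-form check at $(q,q,1)$ --- sides $2q,\,1+q,\,1+q$, area $q\sqrt{1+2q}$, apex angle $2\sin^{-1}\big(\frac{q}{1+q}\big)$ --- does reproduce the right-hand side of (\ref{eqn:Florian}) exactly, consistent with the paper's identity $s(q)=\delta_r(1,q,q)$.
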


In Section 2, we will show that if the packing is doubly periodic and triangulated, this inequality becomes strict.

The function $s(q)$ tends to {1} as $q$ tends to {0}, since arbitrarily small disks can fill up any gaps in a packing. As $q$ approaches {1}, the bound decreases monotonically to $\pi/\sqrt{12}$, recovering the hexagonal lattice.

 This formula is mentioned in Fejes T\'oth's 1964 book \textit{Regular Figures}. Additionally, Fejes T\'oth provides guesses for the densest possible packings with radius ratio greater than or equal to a given $q$ (Figure 1.1). Although none of the guesses exactly reach Florian's bound, some of them come quite close, while others are noticeably lower.

\begin{figure}[H]
\centering
\captionsetup{labelsep=colon,margin=1.5cm}
\includegraphics[scale=0.3]{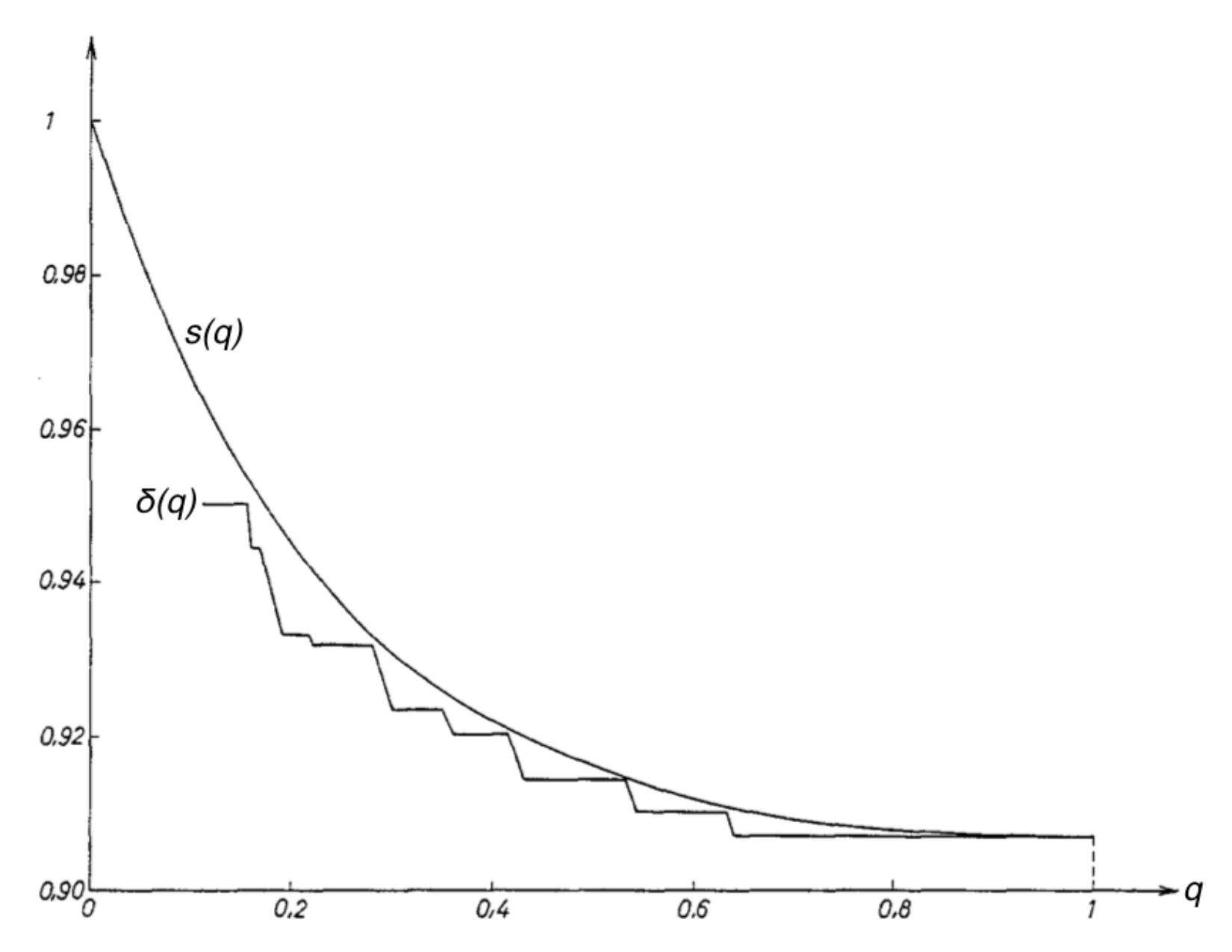}
\caption{Florian's bound $s(q)$ along with $\delta(q)$, the density of Fejes T\'oth's best guess as a function of $q$ reproduced from page 189 of Fejes T\'oth's book \cite{MR0165423}. The final three piecewise sections of $\delta(q)$ will be defined exactly in Section \ref{sect:53}.\quad\cite{MR0377702}}\label{fig:LFT-densities}
\end{figure}

 The problem of finding all triangulated  packings with  $n$ different radii is interesting because we can scour these packings for ones which improve Fejes T\'oth's guesses and come closer to Florian's bound. In fact, with Fernique's three disk packings in \cite{MR4292755} we can do exactly that, which we explain in Section \ref{sect:Fernique}. A knowledge of how close we can get to Florian's bound is important because it helps us with a more general question: Given an arbitrary set of disks with radii between $q$ and 1, what is the densest possible way to arrange them into a periodic packing?

 Another interesting thing to do with these newly-found packings is to find which plane symmetry groups each of them belong to. This is important because the orbifold of a given symmetry group can allow us to construct systematically new packings of any order, although the methods of Kennedy \cite{MR2195054} and Fernique \cite{MR4292755} are sufficient to find all examples for two sizes and three sizes of disks, respectively.


\section{Multiple Size Packings}\label{sect:multiple-size}

If the graph of a packing is a triangulation of the plane, the density of the packing can be calculated by taking an appropriate weighted average of the densities of the packing restricted to each triangle.  Florian's bound (\ref{eqn:Florian}) is the density of a packing of three circular disks in mutual contact, one of radius $r_1$ and two of radius $r_2\le r_1$, where $q=r_2/r_1$, in a triangle formed by their centers.  In general, the density of a packing of $3$ disks of radius $r_i=\tan(\theta_i)$, for $i=1,2,3$ in the triangle formed by the centers is the following function $\delta(\theta_1,\theta_2,\theta_3)$:

\begin{equation}\label{eqn:angle-density}
\delta(\theta_1,\theta_2,\theta_3)=\frac{(\pi/2-\theta_1)\tan^2(\theta_1)+(\pi/2-\theta_2)\tan^2(\theta_2)+(\pi/2-\theta_3)\tan^2(\theta_3)}{\tan(\theta_1)+\tan(\theta_2)+\tan(\theta_3)}
\end{equation}
\begin{figure}[H]
\centering
\captionsetup{labelsep=colon,margin=1.9cm}
\includegraphics[scale=0.2]{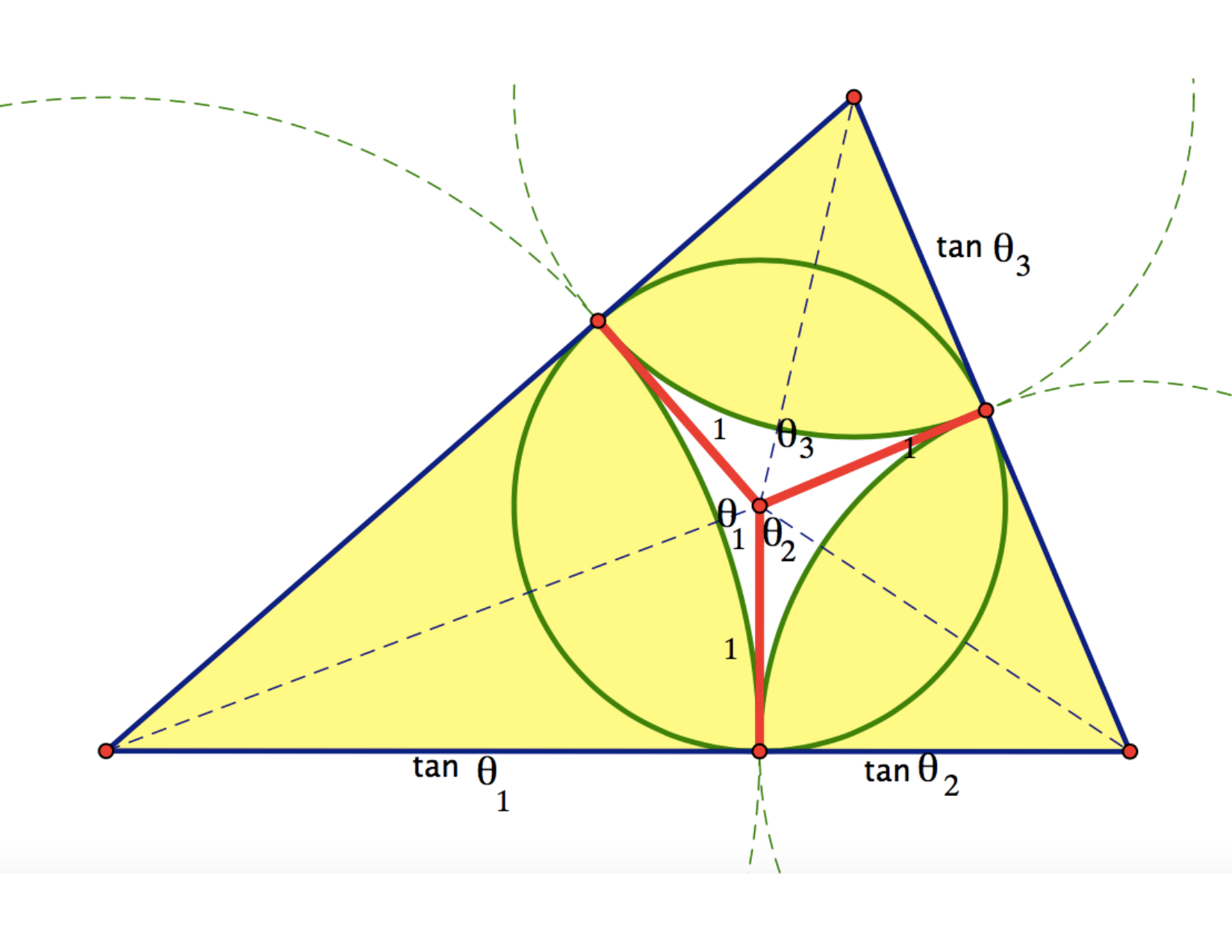}
\caption{Diagram of three mutually tangent circles (dashed green) and the triangle with vertices at their centers (solid blue). There are also the triangle's angle bisectors (dashed blue) and its inscribed circle (solid green).} \label{fig:density}
\end{figure}

The diagram is normalized so that the radius of the incircle is 1. The red segments are radii of the incircle which are orthogonal to the sides of the triangle. $\theta_1$, $\theta_2$, and $\theta_3$ are each angles between one of the angle bisectors and an adjacent red segment and each $\tan \theta_i = r_i$, the $i$-th radius. The density of the packing in the triangle, $\delta(\theta_1,\theta_2,\theta_3)$, is the ratio of the yellow area to the area of the entire triangle. We will show that $\delta$ is minimized when the radii of the three dashed circles are equal.

One can check that when $q=r_2/r_1=r_3/r_1 = \tan (\theta_2)/\tan (\theta_1)= \tan (\theta_3)/\tan (\theta_1)$, then $s(q)=\delta(\theta_1,\theta_2,\theta_3)$. In this case $\theta_1=\pi-2\theta_2=\sin^{-1} (\frac{q}{1+q})$ and $\tan(\theta_1)=\sqrt{\frac{1+2q}{q}}$, $\tan(\theta_2)=\sqrt{1+2q}$.

\subsection{Triangulated Packing's Minimum Density}\label{subsect:triangle-density}

Here we show that the \emph{minimum} density of all triangulated packings is when all the radii of all the disks are equal.

\bigskip
Let the area of the union of the yellow sectors be $A(\theta_1, \theta_2,\theta_2)$ as in Figure {\ref{fig:density}}. Let $a(\theta)=(\pi/2-\theta)\tan^2(\theta)$, then

 \begin{eqnarray} \label{eqn:sector-angle}
 A(\theta_1, \theta_2,\theta_3)&=& a(\theta_1)+ a(\theta_2)+ a(\theta_3)\nonumber\\
 &=&(\pi/2-\theta_1)\tan^2(\theta_1)+(\pi/2-\theta_2)\tan^2(\theta_2)+(\pi/2-\theta_3)\tan^2(\theta_3).
\end{eqnarray}

Twice the area of a right triangle of side length $1$, and angle $\theta$ adjacent to that unit length is $\tan(\theta)$.  Let 
$T(\theta_1, \theta_2,\theta_2)$ be the area of the triangle as in  Figure {\ref{fig:density}}.  Its area is the sum of the areas of the six smaller right triangles, so

 \begin{eqnarray} \label{eqn:triangle-area}
T(\theta_1, \theta_2,\theta_3)=\tan(\theta_1)+\tan(\theta_2)+\tan(\theta_3).
\end{eqnarray}

Thus overall the density of the covered portion of the triangle as in Figure \ref{fig:density} is

\begin{eqnarray}\label{eqn:density-angle}
\delta(\theta_1, \theta_2,\theta_3)&=&A(\theta_1, \theta_2,\theta_3)/T(\theta_1, \theta_2,\theta_3)\nonumber\\
&=&\frac{(\pi/2-\theta_1)\tan^2(\theta_1)+(\pi/2-\theta_2)\tan^2(\theta_2)+(\pi/2-\theta_3)\tan^2(\theta_3)}{\tan(\theta_1)+\tan(\theta_2)+\tan(\theta_3)}
\end{eqnarray}

Here we assume that $\theta_1+\theta_2+\theta_3=\pi$ and each $0<\theta_i<\pi/2$ so that the angles come from the situation of  Figure {\ref{fig:density}}.

We are mainly interested in the following:

\begin{theorem}\label{thm:min-density}  The minimum value of $\delta$ is $\pi/\sqrt{12}$, and is achieved only when $\theta_1=\theta_2=\theta_3=\pi/3$.
\end{theorem}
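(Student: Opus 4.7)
The plan is to reduce the two-variable optimization on the simplex $\{\theta_1+\theta_2+\theta_3=\pi\}$ to a single-variable convexity statement and then invoke Jensen's inequality. Since $T>0$, the bound $\delta\ge \pi/\sqrt{12}$ is equivalent to $A-\tfrac{\pi}{\sqrt{12}}\,T\ge 0$, and formulas (\ref{eqn:sector-angle}) and (\ref{eqn:triangle-area}) split this additively as
\[
\sum_{i=1}^{3}\psi(\theta_i)\;\ge\;0,\qquad \psi(\theta):=\bigl(\tfrac{\pi}{2}-\theta\bigr)\tan^2\theta-\tfrac{\pi}{\sqrt{12}}\tan\theta.
\]
A direct check gives $\psi(\pi/3)=\tfrac{\pi}{6}\cdot 3-\tfrac{\pi}{\sqrt{12}}\sqrt{3}=0$, and the constraint $\sum_i\theta_i=\pi$ says exactly that the mean of the $\theta_i$ equals $\pi/3$. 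Hence if $\psi$ is strictly convex on $(0,\pi/2)$, Jensen's inequality yields $\sum_i\psi(\theta_i)\ge 3\psi(\pi/3)=0$ with equality iff $\theta_1=\theta_2=\theta_3=\pi/3$, which is exactly the theorem.

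The technical core is therefore to verify $\psi''>0$ on $(0,\pi/2)$. Differentiating twice and simplifying,
\[
\psi''(\theta)\;=\;2\sec^2\theta\cdot B(\theta),\qquad B(\theta):=\bigl(\tfrac{\pi}{2}-\theta\bigr)\bigl(1+3\tan^2\theta\bigr)-\bigl(2+\tfrac{\pi}{\sqrt{12}}\bigr)\tan\theta,
\]
so the question reduces to showing $B(\theta)>0$ on $(0,\pi/2)$. The endpoints behave well: $B(0)=\pi/2>0$, and as $\theta\to\pi/2^-$ the leading asymptotic $B(\theta)\sim (1-\pi/\sqrt{12})/(\pi/2-\theta)$ tends to $+\infty$ because $\pi/\sqrt{12}<1$. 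For the interior I would substitute $u=\tan\theta$ (so $\pi/2-\theta=\arctan(1/u)$), compute $B'$ in the variable $u$, show it has a unique zero on $(0,\infty)$ by sign analysis, and then evaluate $B$ at that critical point.

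The hard part is precisely this interior positivity check: numerically the minimum of $B$ is small (around $0.2$, attained near $\tan\theta\approx 1.5$), so crude estimates will not suffice and one has to pin the minimizer and its value down with some care. Should full convexity prove unwieldy, one can fall back on the strictly weaker tangent-line bound $\psi(\theta)\ge \psi'(\pi/3)(\theta-\pi/3)$ at the single point $\theta=\pi/3$; summed over the three angles this still yields $\sum_i\psi(\theta_i)\ge 0$ by the constraint $\sum_i(\theta_i-\pi/3)=0$, and it requires only a one-sided single-variable estimate rather than global second-derivative positivity.
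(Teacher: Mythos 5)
Your strategy is correct and takes a genuinely different route from the paper. The paper passes to the complementary density $\bar\delta=(T-A)/T$ and optimizes numerator and denominator \emph{separately}: it shows $T=\sum_i t(\theta_i)$ is minimized and $T-A=\sum_i\bigl(t(\theta_i)-a(\theta_i)\bigr)$ is maximized at the equilateral point, via a fix-$\theta_3$, equalize-$\theta_1,\theta_2$ argument resting on $t''>0$ and on the concavity of $t-a$ (the quantity the paper's lemma calls $a''$ and shows to be negative is, as its displayed formula reveals, really $(t-a)''$; the sign is checked by Maple). You instead absorb the denominator into the target constant, rewriting $\delta\ge\pi/\sqrt{12}$ as $\sum_i\psi(\theta_i)\ge 0$ for the single function $\psi=a-\tfrac{\pi}{\sqrt{12}}t$, and apply Jensen once on the constraint $\sum_i\theta_i=\pi$; with strict convexity this also delivers the uniqueness of the equality case, so the full statement is covered. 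The two routes are tightly linked: writing $c=\pi/\sqrt{12}<1$, your $\psi''=(a''-t'')+(1-c)\,t''$ is positive whenever the paper's two facts hold, and both approaches bottom out in an inequality of the shape $(\pi/2-\theta)(1+3\tan^2\theta)>k\tan\theta$ on $(0,\pi/2)$, with $k=3$ for the paper and $k=2+\pi/\sqrt{12}\approx 2.907$ for you. This is where your version buys something real: at $k=3$ the two sides agree to order $(\pi/2-\theta)^3$ as $\theta\to\pi/2$ against a main term of order $(\pi/2-\theta)^{-1}$, which is presumably why the paper delegates it to Maple, whereas your $B(\theta)$ is bounded below by roughly $0.197$ and tends to $+\infty$ at both endpoints, so it is appreciably easier to certify by hand. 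The one outstanding item is that certification: you give a plan and numerics for $B>0$ but no proof, so as written your argument sits at the same level of completeness as the paper's computer-checked step. Either carry out the $u=\tan\theta$ analysis you sketch, or use the tangent-line fallback --- noting that the latter still requires a one-sided bound valid on all of $(0,\pi/2)$, so it reduces but does not eliminate the global single-variable estimate.
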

In other words, this is achieved only when the radii of the touching circles, the $\tan(\theta_i)$ for our normalization are equal.  In order to simplify the calculations, instead of calculating the critical minimum density directly, we will compute the complimentary maximum density 
 
\[
\bar\delta=1-\delta=(T(\theta_1, \theta_2,\theta_3)-A(\theta_1, \theta_2,\theta_3))/T(\theta_1, \theta_2,\theta_3).
\]  

This is the ratio of the curvilinear triangle in the unit circle over the area of the larger triangle that contains the unit circle.  This result follows from the next theorem.

\begin{lemma}Subject to $\theta_1+\theta_2+\theta_3=\pi$ and each $0<\theta_i<\pi/2$ the maximum value of  $T(\theta_1, \theta_2,\theta_3)-A(\theta_1, \theta_2,\theta_3)$ is achieved only when $\theta_1=\theta_2=\theta_3=\pi/3$ and the minimum value of 
$T(\theta_1, \theta_2,\theta_3)$ is achieved only when $\theta_1=\theta_2=\theta_3=\pi/3$.
\end{lemma} 

To do this fix one of the angles, say $\theta_3$, and then regard $\theta_2$ as a function of $\theta_1$, where $\theta_2=\pi-\theta_1-\theta_3$.  Since $\tan(\theta_3)$ and $\theta_3$ are constant, we have the following:

\begin{lemma} The maximum of $\tan(\theta_1)-a(\theta_1)+\tan(\pi-\theta_1-\theta_3)-a(\pi-\theta_1-\theta_3)$ and the minimum of $\tan(\theta_1)+\tan(\pi-\theta_1-\theta_3)$ occur only when $\theta_1=\theta_2=\pi-\theta_1-\theta_3$.
\end{lemma}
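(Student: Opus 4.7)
The plan is to treat both statements as one-variable optimizations in $\theta_1$, with $\theta_2=\pi-\theta_1-\theta_3$ an affine function of $\theta_1$, and to establish strict convexity or concavity so that the critical point forced by the $\theta_1\leftrightarrow\theta_2$ symmetry is the unique extremum. Both $g(\theta_1):=\tan\theta_1+\tan\theta_2$ and $f(\theta_1):=h(\theta_1)+h(\theta_2)$, where $h(\theta):=\tan\theta-(\pi/2-\theta)\tan^2\theta=t(\theta)-a(\theta)$, are invariant under the involution $\theta_1\mapsto(\pi-\theta_3)-\theta_1$, so the symmetric point $\theta_1=\theta_2=(\pi-\theta_3)/2$ is automatically a critical point; the rest is to rule out other critical points and to check the sign of the second derivative at that point.

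For $g$ the computation is immediate:
\[ g'(\theta_1)=\sec^2\theta_1-\sec^2\theta_2,\qquad g''(\theta_1)=2\sec^2\theta_1\tan\theta_1+2\sec^2\theta_2\tan\theta_2. \]
On the admissible range $\theta_i\in(0,\pi/2)$ the second derivative is strictly positive, so $g$ is strictly convex and its unique minimum is at $\theta_1=\theta_2$. For $f$ the same program reduces the claim to showing $h''(\theta)<0$ on $(0,\pi/2)$. A direct differentiation yields
\[ h''(\theta)=2\sec^2\theta\bigl[\,3\tan\theta-(\pi/2-\theta)(1+3\tan^2\theta)\bigr], \]
so the task becomes the trigonometric inequality $3\tan\theta<(\pi/2-\theta)(1+3\tan^2\theta)$ on the whole interval.

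This last inequality is the main obstacle, and it is sharp in the sense that both sides nearly match at $\theta=\pi/3$. My plan is to multiply by $\cos^2\theta$ and then substitute $\psi=\pi-2\theta\in(0,\pi)$, which turns it into the cleaner form $F(\psi):=\psi(2+\cos\psi)-3\sin\psi>0$. A short calculation gives $F(0)=F'(0)=F''(0)=0$ together with $F'''(\psi)=\psi\sin\psi$, which is strictly positive on $(0,\pi)$. Integrating successively then shows $F''$, then $F'$, and finally $F$ are strictly positive throughout $(0,\pi)$, which gives $h''<0$ and finishes the proof of the lemma.
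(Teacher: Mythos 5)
Your proof is correct, and it follows the same overall skeleton as the paper: fix $\theta_3$, use the $\theta_1\leftrightarrow\theta_2$ symmetry to identify the critical point, and settle uniqueness and type by the sign of the second derivative of the single-variable summands $t$ and $t-a$. Your formula $h''(\theta)=2\sec^2\theta\bigl[3\tan\theta-(\pi/2-\theta)(1+3\tan^2\theta)\bigr]$ agrees with the expression the paper displays (note the paper's displayed ``$a'$'' and ``$a''$'' are in fact the derivatives of $t-a$, as one sees from the leading $1+2\tan^2\theta$ term, so there is no discrepancy with your computation). The one place where you genuinely diverge is the crucial inequality $h''<0$ on $(0,\pi/2)$: the paper simply asserts the sign of the resulting expression and states that it ``is verified by Maple,'' whereas you reduce it, via multiplication by $\cos^2\theta$ and the substitution $\psi=\pi-2\theta$, to $F(\psi)=\psi(2+\cos\psi)-3\sin\psi>0$ on $(0,\pi)$, which follows cleanly from $F(0)=F'(0)=F''(0)=0$ and $F'''(\psi)=\psi\sin\psi>0$. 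I checked this reduction and the third derivative; both are right, and the endpoint behaviour ($F\to0$ as $\psi\to0$, i.e.\ $\theta\to\pi/2$) is handled correctly since the interval is open. This buys a self-contained, computer-free verification of the only nontrivial step, which is a real improvement over the paper's argument; the paper's version buys nothing except brevity.
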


\begin{proof}  First the $\tan(\theta)$ case.  For $\theta_3$ fixed, it is clear that $\theta_1=\theta_2=\pi-2\theta_1$ is a critical point.  We calculate the derivatives for $0< \theta<\pi/2$,
\begin{eqnarray*}\label{eqn:area-angle}
 \tan'(\theta)&=&1+\tan^2(\theta)\\
  \tan''(\theta)&=&2\tan(\theta)(1+\tan^2(\theta))>0.
\end{eqnarray*}
Thus $(\theta_1, \theta_1,\pi-2\theta_1)$ is the unique minimum point for $T$ when $\theta_1+\theta_2+\theta_3=\pi$ and each $0<\theta_i<\pi/2$.

For $a(\theta)$ the argument is similar.
\begin{eqnarray*}
 a'(\theta)&=&1+2\tan^2(\theta)-(\pi-2\theta)\tan(\theta)(1+\tan^2(\theta))\\
  a''(\theta)&=&\frac{(2\pi-4\theta)\cos^2(\theta)+6\sin(\theta)\cos(\theta)-3\pi+6\theta}{\cos^4(\theta)}\nonumber
   <0.
\end{eqnarray*}
The last inequality is verified by Maple.  Applying this to each pair of $\theta_i$ at a time, we get that the only overall minimum point for $\delta$ is when $\theta_1=\theta_2=\theta_3=\pi/3$. \qed
\end{proof}

\begin{remark}  The calculation for the tangent above essentially verifies the well-known fact that the maximal area of a triangle enclosing a fixed circle occurs when the triangle is equilateral.
\end{remark}

\begin{corollary}[\ref{thm:min-density}]\label{thm:triangle-pack-min}
The density of a triangulated/compact doubly periodic disk packing, with at least two distinct sizes of disks, is strictly greater than $\pi/\sqrt{12} = 0.9068996821..$.
\end{corollary}

By Theorem \ref{thm:min-density} the density of any packing, restricted to any triangle in that packing is at least $\pi/\sqrt{12}$ and is strictly greater unless all the radii of the triangle are the same.  Since the density of the whole packing is a weighted average of the densities of each triangle, when at least two radii are used, the overall density is strictly greater than $\pi/\sqrt{12}$.


\subsection{Density in Terms of Radii}\label{subsect:radii}

Expression (\ref{eqn:angle-density}) for the density of three disks in a triangle is in terms of the angles of the bounding triangle which useful for Theorem \ref{thm:triangle-pack-min}, but it is also useful to write the same density in terms of the three radii that determine the triangle.  From Heron's formula for a triangle, the area of the triangle is 

\begin{eqnarray*}\label{eqn:Heron}
T_r=T_r(r_1,r_2,r_3)=\sqrt{r_1r_2r_3(r_1+r_2+r_3)}=R(r_1+r_2+r_3),
\end{eqnarray*}
where $R$ is the inradius (that was assumed to be $1$ in Figure \ref{fig:density}).  Thus the inradius is 

\begin{eqnarray*}\label{eqn:inradius}
R=R(r_1,r_2,r_3)=\sqrt{r_1r_2r_3/(r_1+r_2+r_3)}.
\end{eqnarray*}

Thus the density of three disks in a triangle as in Figure (\ref{fig:density}) from Equation (\ref{eqn:density-angle}) is 

\begin{eqnarray}\label{eqn:density-radius}
\delta_r(r_1, r_2,r_3)=\frac{(\pi/2-\tan^{-1}(r_1/R))r_1^2+(\pi/2-\tan^{-1}(r_2/R))r_2^2+(\pi/2-\tan^{-1}(r_3/R))r_3^2}{T_r}
\end{eqnarray}

Then one can check that Florian's bound Equation (\ref{eqn:Florian}) for $0<q \le 1$ is

\begin{eqnarray*}\label{eqn:Florian-derived}
s(q)=\delta_r(1, q,q).
\end{eqnarray*}


\subsection{Comments about the Florian Bound}\label{subsect:comments}

Part of Florian's bound is that if there are two sizes of disks, large and small,  and one puts three disks in contact as in Figure \ref{fig:density}, there are three ways to do it, all the same size, which has density, $\pi/\sqrt{12}$, or large-large-small, or large-small-small.  Theorem \ref{thm:min-density} shows that when all three have the same size, the density is the smallest of the three cases.  The large-small-small case always has the largest density.  Figure \ref{fig:comparison-intermediate} shows a typical case for the density in a triangle of $\delta_r( r_0, r,1)$, where $ 0 \le r_0 \le r \le 1$.

\begin{figure}[H]
\centering
\includegraphics[scale=0.4]{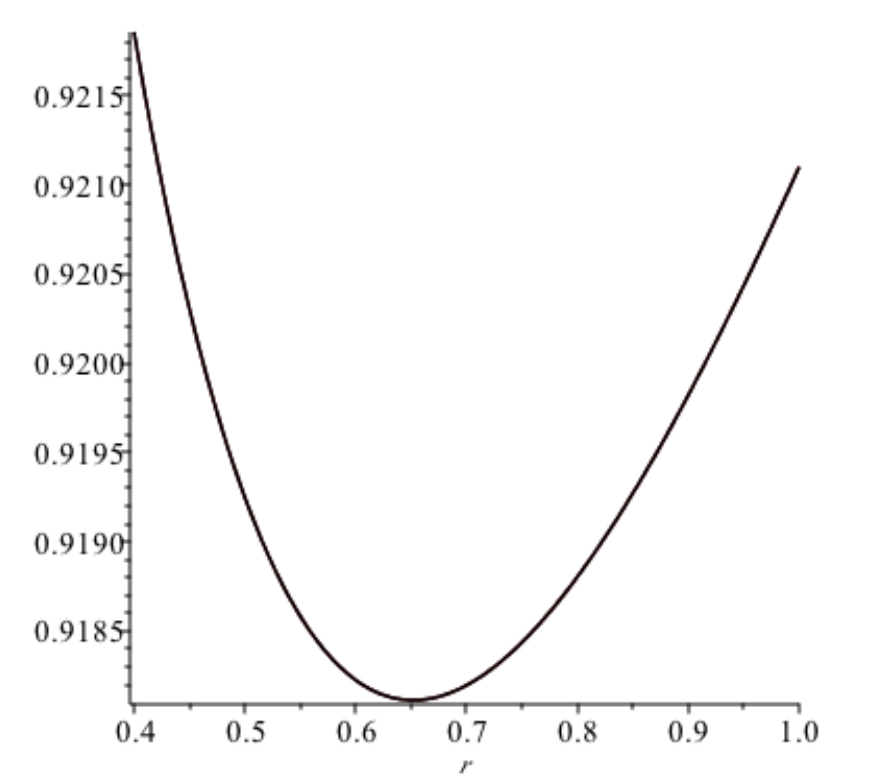}
\captionsetup{labelsep=colon,margin=1.0cm}
\caption{This shows the density $\delta_r(r_0, r, 1)$ of a packing in a triangle, where the  smallest radius $r_0=0.4$  and largest $1$ are fixed, with the intermediate radius $r_0 = 0.4 \le r \le 1$ varying between the two.  Note that the ends of the interval $r=r_0$ and $r=1$ have the largest density locally, with the  large-small-small case $ 1, r_0, r_0$ having the largest density globally.}\label{fig:comparison-intermediate}
\end{figure}

On the other hand, if one compares the density of the two ends of the interval in Figure \ref{fig:comparison-intermediate}, the ratio of the two densities is very close to $1$.

\begin{figure}[H]
\centering
\includegraphics[scale=0.6]{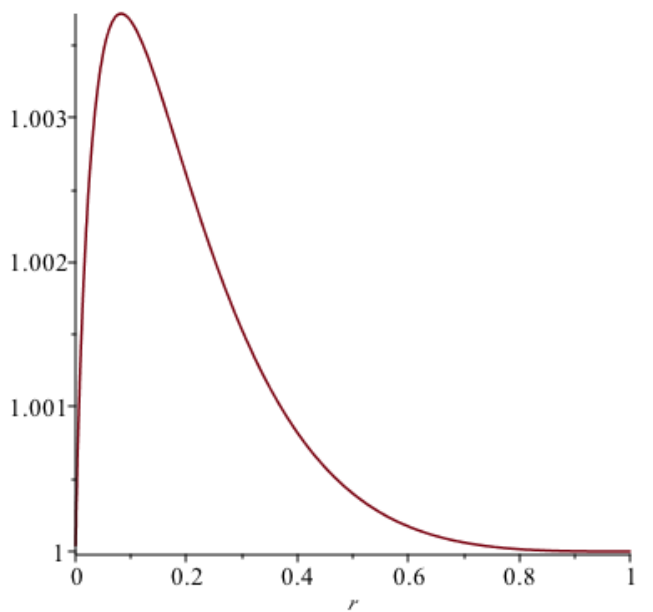}
\captionsetup{labelsep=colon,margin=1.3cm}
\caption{This shows the ratio of $\delta_r(1,r,r)/\delta_r(1,1,r)$, for $0<r<1$, and it appears that for all $r$, the ratio is less than $1.00372119$.  Although, the large-small-small case always has higher density, the density $\delta_r(1,1,r)$ is quite close to $\delta_r(1,r,r)$.}\label{fig:ratios}
\end{figure}


\subsection{The Florian Bound is Never Achieved}\label{subsect:Florian-off}

In \cite{MR2007963}  Alad\'{a}r Heppes said ``\emph{The upper bounds given by L. Fejes T\'oth and Moln\'ar [FM] for the least upper bound $\delta(1,r)$ of the density of a packing of unit disks and disks of radius $r < 1$ have been sharpened by Florian \cite{MR0155235}, who proved that the density cannot exceed the packing density within a triangle determined by the centers of mutually touching circles of radius $1, r$ and $r$. Unfortunately, such packings do not tile the plane for any value of $r$, thus this general bound is never sharp.}"

We explain that last statement here.
We assume that the packing is periodic with a finite number of packing disks per fundamental region, say.  Equivalently, this means that the packing is a collection of circular disks with disjoint interiors in a flat torus, which is determined by some lattice with two independent generators.  For any such packing, normalize the largest radius of a packing to be $1$, and suppose that the smallest radius of the packing is $r_0 < 1$.  Let the other radii of any triangle that contains the radius $1$ be $r_1 \le r_2 \le 1$ and $r_0 \le r_1$ of course. Then 

\begin{eqnarray}\label{eqn:density-monotone}\delta_r(r_1, r_2, 1) \le  \delta_r(r_1, r_1, 1) = s( r_1) \le \delta_r(r_0, r_0, 1)=s(r_0),
\end{eqnarray}
from Figure \ref{fig:comparison-intermediate} and the monotone decreasing property of Florian's bound Figure \ref{eqn:Florian}.  Furthermore if either of the inequalities in (\ref{eqn:density-monotone}) is strict, Florian's overall bound for the packing will never be equality for a doubly periodic packing, say.  We prove the following:

\begin{theorem}\label{thm:strict} If $\delta$ is the density of a doubly periodic triangulated  packing in the plane with radii between $1$ and $q$, then $\delta < s(q)$.
\end{theorem}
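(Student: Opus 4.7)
The plan is to combine the per-triangle inequalities from (\ref{eqn:density-monotone}) with a local combinatorial rigidity argument on the contact graph, showing that equality $\delta = s(q)$ would force an impossible packing structure. Since $\delta$ is the area-weighted average over triangles of the triangulation, it suffices to prove each triangle density is $\le s(q)$ (already available in (\ref{eqn:density-monotone})) and that at least one triangle contributes strict inequality.

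For the per-triangle analysis, let an arbitrary triangle have sorted radii $r_1 \le r_2 \le r_3$, so $r_1 \ge q$ and $r_3 \le 1$. By scale invariance of $\delta_r$ together with (\ref{eqn:density-monotone}),
\[
\delta_r(r_1, r_2, r_3) \;=\; \delta_r(r_1/r_3,\, r_2/r_3,\, 1) \;\le\; \delta_r(r_1/r_3,\, r_1/r_3,\, 1) \;=\; s(r_1/r_3) \;\le\; s(q),
\]
where the first inequality is strict unless $r_1 = r_2$ (since Figure \ref{fig:comparison-intermediate} exhibits large-small-small as the unique global maximum of $r \mapsto \delta_r(r_0, r, 1)$ on $[r_0,1]$) and the second is strict unless $r_1/r_3 = q$ (by strict monotonic decrease of $s$). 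Equality $\delta = s(q)$ therefore forces every triangle to satisfy $r_1 = r_2 = q$ and $r_3 = 1$, that is, to be of type $(q, q, 1)$.

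The main obstacle is the remaining step: ruling out any doubly periodic triangulated packing consisting entirely of $(q, q, 1)$ triangles with $0 < q < 1$. I would proceed by a vertex-figure analysis. Let $\alpha$ denote the angle of the $(q, q, 1)$ triangle at its radius-$1$ vertex and $\beta = (\pi - \alpha)/2$ the angle at each radius-$q$ vertex. Around any radius-$q$ disk, consecutive contact neighbors must alternate in radius between $q$ and $1$ (since each pair, together with the center disk, bounds a $(q, q, 1)$ triangle), forcing the vertex degree to be an even integer $2k$ with $2k\beta = 2\pi$, hence $\beta = \pi/k$. Around any radius-$1$ disk, every neighbor has radius $q$, and the degree equals $2\pi/\alpha = 2k/(k-2) = 2 + 4/(k-2)$, whose integrality forces $k \in \{3, 4, 6\}$. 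The law of cosines on the $(q, q, 1)$ triangle gives $\cos\alpha = 1 - 2q^2/(1+q)^2$; solving yields $q = 1$ (the excluded hexagonal case) for $k = 3$, $q = 1 + \sqrt{2}$ for $k = 4$, and $q = 3 + 2\sqrt{3}$ for $k = 6$, none of which lie in $(0,1)$. Hence no such packing exists, so at least one triangle of the given packing achieves strict inequality, and $\delta < s(q)$ follows.
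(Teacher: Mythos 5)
Your proposal is correct and takes essentially the same route as the paper: equality in Florian's bound forces every triangle to be of type $(q,q,1)$ via (\ref{eqn:density-monotone}), and this is then ruled out by an angle count around the two kinds of vertices. Your integrality condition on $2k/(k-2)$ is exactly the paper's equation $1/n+1/m=1/2$ in Lemma \ref{lemma:triangulated-sizes}; the only cosmetic difference is that you solve the law of cosines to exhibit $q=1,\ 1+\sqrt{2},\ 3+2\sqrt{3}$ explicitly, whereas the paper identifies the three solutions with Kennedy's Packings $4$ and $8$ and the hexagonal packing and observes that the large--small--small triangle never occurs there.
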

\begin{proof}Assume that Florian's bound  in Theorem \ref{thm:Florian} is attained, and we look for a contradiction.  Let $r_0$ be the smallest radius of a disk, and $1$ the largest radius.  Choose any disk of radius $1$.  From the discussion above, each of its adjacent disks must have radius $r_0$ as well.  Similarly, the disks in order around any $r_0$, must be alternately $1, r_0, 1, r_0, \dots$,  for an even number of adjacent disks.  Otherwise, we would have three adjacent disks with radii $r_0, r, 1$, with $r_0 < r\le 1$, where the triangle of centers would have density strictly less than $s(r_0)$ contradicting our assumption.  Continuing this way, we see that all the triangles of the triangulation correspond to packing disks with radii, $r_0, r_0, 1$.  Not only that, but the number of disks of size $1$ will be adjacent to exactly, say $n \ge 3$ other disks of radius $r_0$, and each disk of radius $r_0$ will be adjacent to exactly $2m \ge 4$ other disks, $m$ with radius $1$, $m$ with radius $r_0$.  In particular, there will only be two sizes of disks.  All triangulated packings with just two sizes of disks have been found by Kennedy \cite{MR2195054}, see Figure \ref{fig:Kennedy-9}, and they all have at least one triangle that is either equilateral or that corresponds to the $1,1,r_0$. Alternatively, one can use the following Lemma \ref{lemma:triangulated-sizes} that finds all triangulated packings with two sizes of disks, where each disk has radii of size $a, b, b$, where $a \ne b$ are positive radii, and the large-small-small case never appears. \qed
\end{proof}

\begin{figure}[H]
\centering
\includegraphics[scale=0.56]{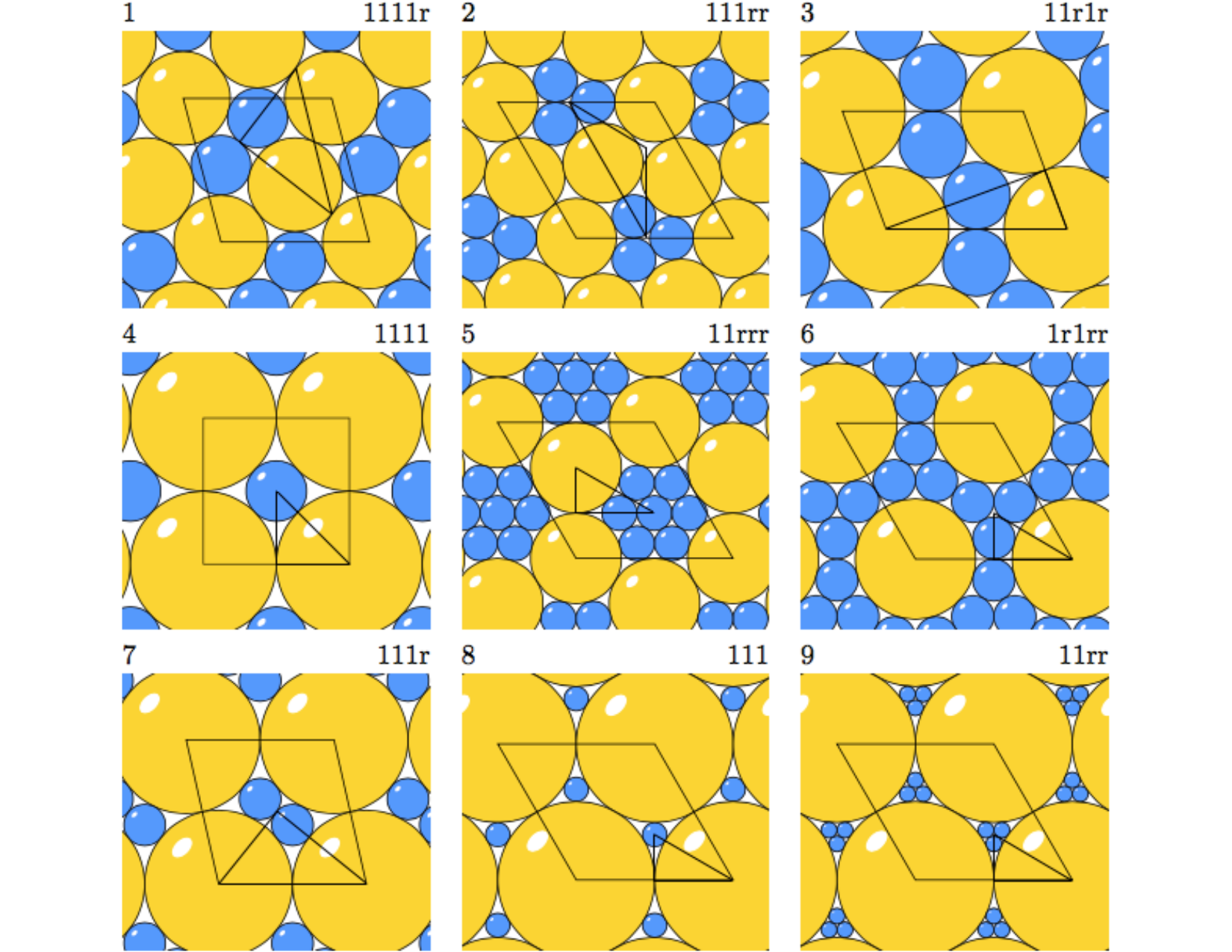}
\captionsetup{labelsep=colon,margin=1.3cm}
\caption{This shows the packings of a torus with two distinct sizes of radii, taken from Kennedy's list in \cite{MR2195054}.}\label{fig:Kennedy-9}
\end{figure}

\begin{lemma}\label{lemma:triangulated-sizes} Suppose we have a triangulated  packing of a flat torus, where the disks corresponding to each triangle have radii $a, b, b$.  Then the only possible packings are Packings $4, 8$ in Figure \ref{fig:Kennedy-9} and the equal radius packing when $a=b$.
\end{lemma}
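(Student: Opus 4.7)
The plan is to reduce the classification to a Diophantine condition on two integers by exploiting the local combinatorics at each disk. I assume $a \ne b$ (the case $a = b$ is the hexagonal packing and is listed as the third outcome). First, every neighbor of an $a$-disk must be a $b$-disk, because in each triangle incident to an $a$-disk, that $a$-disk is the unique $a$-vertex. Let $n$ denote the degree of any $a$-disk; all $n$ neighbors are $b$'s. Second, at any $b$-disk, each consecutive pair of neighbors together with the central $b$ forms a triangle of type $(a,b,b)$; since the center already supplies a $b$-vertex, the other two vertices are one $a$ and one $b$. Hence the neighbors around a $b$-disk alternate $a,b,a,b,\ldots$ in cyclic order, so the degree is an even number $2m$ with $m \geq 2$.

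Next I would compute the two relevant vertex angles in a triangle whose side lengths are the center-to-center distances $a+b,\; a+b,\; 2b$. A short half-angle application of the law of cosines yields
\[
\sin(\alpha/2) \;=\; \frac{b}{a+b} \quad \text{at the } a\text{-vertex}, \qquad \cos\beta \;=\; \frac{b}{a+b} \quad \text{at either } b\text{-vertex}.
\]
Because the angles around each disk sum to $2\pi$, we get $n\alpha = 2m\beta = 2\pi$, and these combine into $\sin(\pi/n) = \cos(\pi/m)$, i.e.\
\[
\frac{1}{n} + \frac{1}{m} \;=\; \frac{1}{2}, \qquad n \geq 3,\; m \geq 2.
\]

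The only integer solutions are $(n,m) \in \{(3,6),(4,4),(6,3)\}$. The pair $(6,3)$ forces $b/(a+b) = 1/2$, i.e.\ $a = b$, so it degenerates to the hexagonal packing. The other two pairs pin down the radius ratio uniquely ($a/b = (2\sqrt{3}-3)/3$ and $a/b = \sqrt{2}-1$, respectively) and, more importantly, completely prescribe the cyclic pattern of triangles around every vertex. Any triangulated packing of a flat torus realizing such a rigid vertex-transitive local structure lifts to a uniquely determined Archimedean-style triangulation of the universal cover, so each surviving $(n,m)$ yields a single packing up to similarity and choice of period lattice; these are to be identified with Packings $4$ and $8$ in Figure \ref{fig:Kennedy-9}.

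The step I expect to require the most care is the final one: passing from ``every vertex has the prescribed neighbor pattern'' to ``the global packing is unique.'' The cleanest route is to invoke the standard rigidity of uniform (vertex-transitive) planar triangulations, or, given that Kennedy's list contains only nine packings \cite{MR2195054}, simply to match the two combinatorial types against that finite list and read off Packings $4$ and $8$.
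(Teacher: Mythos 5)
Your proposal is correct and follows essentially the same route as the paper: all neighbors of an $a$-disk are $b$'s, the neighbors of a $b$-disk alternate so its degree is even, the complementary angles at the $a$- and $b$-vertices of the isosceles triangle give $1/n+1/m=1/2$, and the three integer solutions are matched against Kennedy's list \cite{MR2195054}. The paper is just as terse as you are on the final ``local pattern determines the global packing'' step, likewise delegating it to the finiteness of Kennedy's classification.
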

\begin{proof} Because the shape of each triangle in the triangulation is the same, the number of disks adjacent to a disk with the $a$ radius is the same, say $n \ge 3$, and similarly the number of disks adjacent to one with the $b$ radius is the same even number, say $2m$ for $m \ge 2$, because the neighbors have to alternate as in the proof of Theorem \ref{thm:strict}.  Let $\alpha$ be the half angle at the center of the $a$ radius disk in one of its triangles as in Figure \ref{fig:2-radius}.  Then the angle for the $b$ radius disk in the same triangle is $\pi/2-\alpha$.  So $2\alpha n=2\pi=2m(\pi/2-\alpha)$. Then $\pi/n +\pi/m=\pi/2$, or more simply  $1/n +1/m=1/2$.  Thus the only solutions are $n=3, m=6$, corresponding Packing $8$; $n=4, m=4$ corresponding to Packing $4$; $n$=6, $m$=3 corresponding to when  $a=b$, all in Kennedy's list, Figure \ref{fig:Kennedy-9}, from \cite{MR2195054}.  \qed
\end{proof}

\begin{figure}[H]
\centering
\includegraphics[scale=0.3]{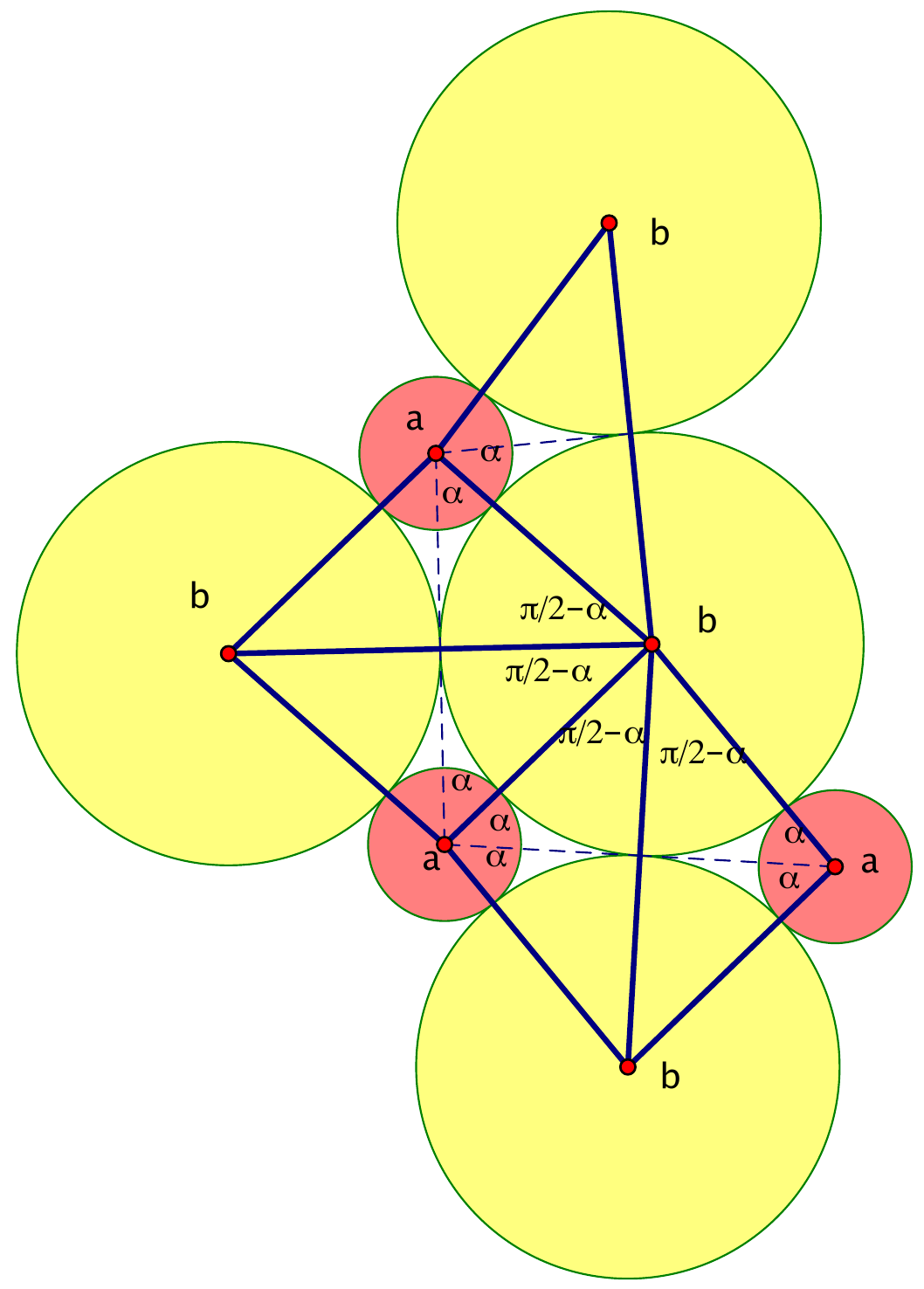}
\captionsetup{labelsep=colon,margin=1.3cm}
\caption{If there is a triangulated packing with only two sizes of disks, this shows how the angles in the triangle must be so the triangulation fits together.}\label{fig:2-radius}
\end{figure}

Notice that the packings of Lemma \ref{lemma:triangulated-sizes} are of the large-large-small type, which do not have the maximum density for a given radius ratio. Nevertheless, they have densities that are very close to Florian's bound in Theorem \ref{thm:Florian}.  Interestingly, Alad\'{a}r Heppes in \cite{MR2007963} proved that for two sizes of disks in the ratio $\sqrt{2}-1$, as in Packing $4$ in Figure \ref{fig:Kennedy-9}, that its density $\pi(2-\sqrt{2})/2=0.9201511858...$ is the maximum possible, while Florian's bound is $0.9208355993..$.

\section{Fejes T\'oth's Packings} \label{sect:53}

Let $q_1= 0.6375559772\ldots$ be the radius ratio of the packing in Fejes T\'oth's book \cite{MR0165423}, (Figure \ref{fig:LFT-figures}  left here), which is the same as Kennedy's first two-disk packing in Figure \ref{fig:Kennedy-9}.

Let $q_2= 0.6457072159\ldots$ be defined such that $\delta_{FT}(q_2)=\pi/\sqrt{12}$, where $\delta_{FT}(q)$ is defined in Equation (\ref{egn:delta}) coming from the middle packing of Figure \ref{fig:LFT-figures}.

 For $q_1<q \leq q_2$, Fejes T\'oth's packing is a version of Kennedy's packing with the radius of the smaller circle increased slightly so that the new ratio is equal to $q$. This, however, causes the packing to no longer be triangulated/compact.

 With some work (See the Appendix), it is possible to write the density of this packing in terms of $q$.  So for $0.6375559772\ldots = q_1 \le q \le q_2= 0.6457072159\ldots$, we have:
 
\begin{equation}\label{egn:delta}
    \delta_{FT}(q)=\frac{\pi(q^2+1)(q+1)^4\sqrt{1+2q}}{4q(2q^2+5q+\sqrt{2q^3+5q^2+2q}+2)(q+\sqrt{2q^3+5q^2+2q})}
\end{equation}
 
 This is the second-to-last piece of $\delta(q)$ shown in Figure \ref{fig:LFT-densities}. So $\delta_{FT}(q_1)=0.9106832003\ldots$ recovers the density of the unperturbed packing, and the density function $\delta_{FT}(q)$ strictly decreases to $\delta_{FT}(q_2)=0.9068996827\ldots=\pi/\sqrt{12}$.  
 This is shown in Figure \ref{fig:graph} with the green line.

 For $q_2<q\leq1$, Fejes T\'oth's guess is simply the hexagonal lattice, with density $\delta_{FT}(q)=\pi/\sqrt{12}$ (Figure \ref{fig:LFT-figures}, right). Note that the packing with ratio $q_2$ is distinct from the hexagonal lattice, despite having the same density.

 \begin{figure}[H]
\centering
\includegraphics[scale=0.5]{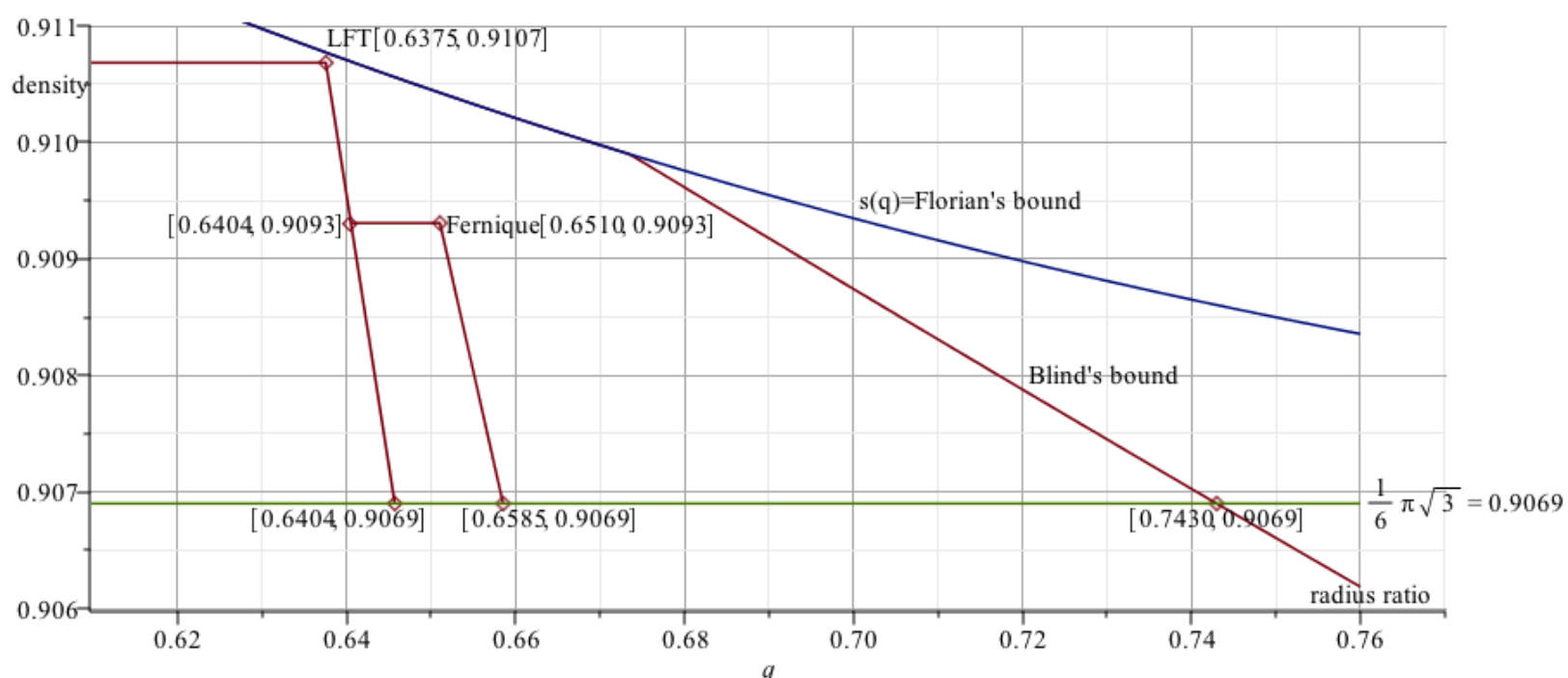}
\captionsetup{labelsep=colon,margin=1.5cm}
\caption{This graph is a magnified version of a portion of the one shown in Figure 1.1, with $s(q)$ in green and $\delta_{FT}(q)=\delta(q)$ in brown. The blue function $\Delta(q)$ shows the improvement to $\delta(q)$. The important values of $q$ are marked by the coordinates of their locations.  See the text for more precise calculations of the values here.}\label{fig:graph}
\end{figure}

 Let 
 
\[q_B=\sqrt{\frac{\sqrt{12}-7\tan(\pi/7)}{5\tan(\pi/5)-\sqrt{12}}}=0.7429909632\ldots\]

\noindent For $q_B\leq q\leq1$, it is known that the hexagonal lattice is the best possible guess. This was shown by Blind in the 1960s in \cite{MR0275291,MR0377702}. He showed that for packing ratio $q$, the density is at most $$\frac{\pi(q^2+1)}{q^2\cdot5\tan(\pi/5)+7\tan(\pi/7)},
$$ 
as is shown in red in Figure \ref{fig:graph}.
\begin{figure}[H]
\centering
\includegraphics[scale=0.15]{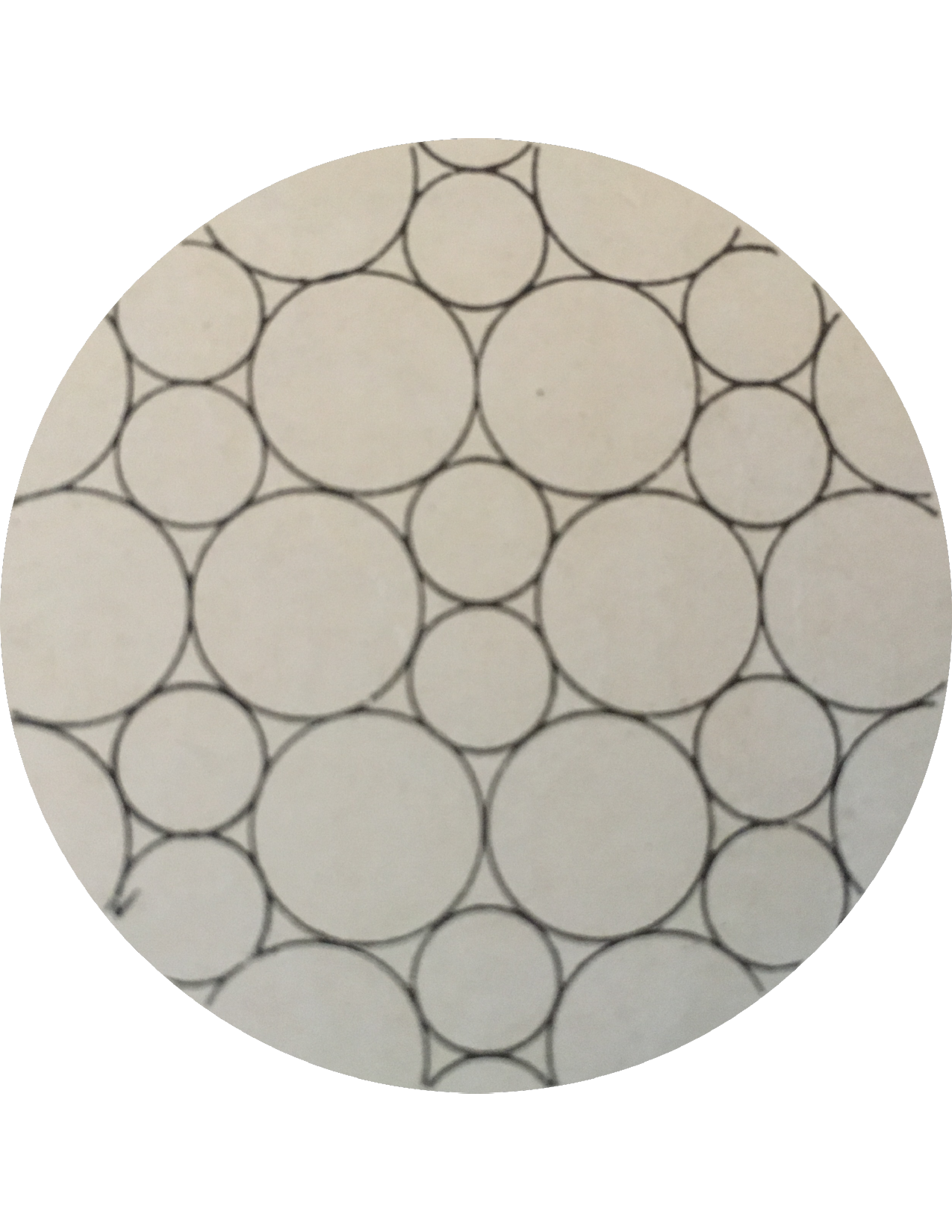}
\includegraphics[scale=0.15]{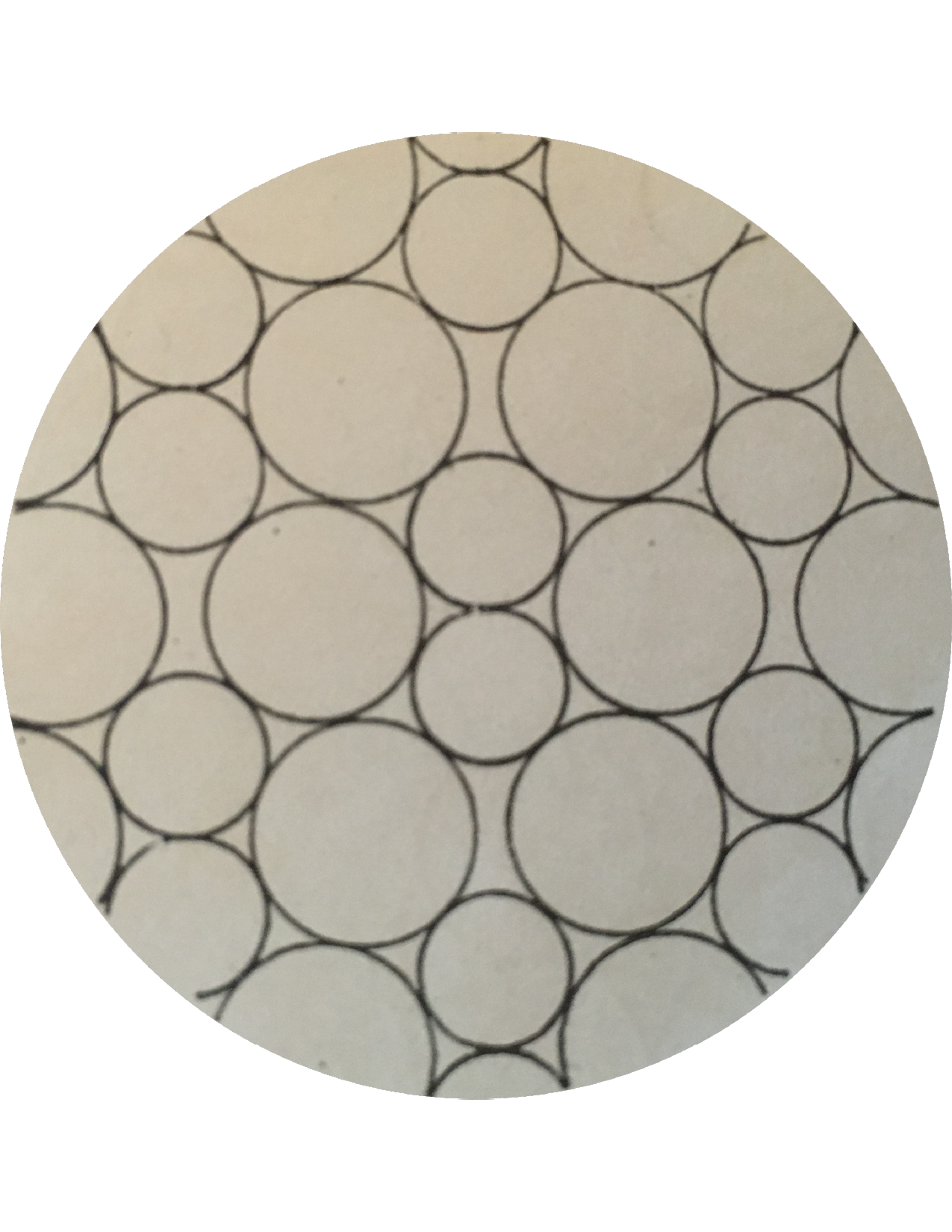}
\includegraphics[scale=0.15]{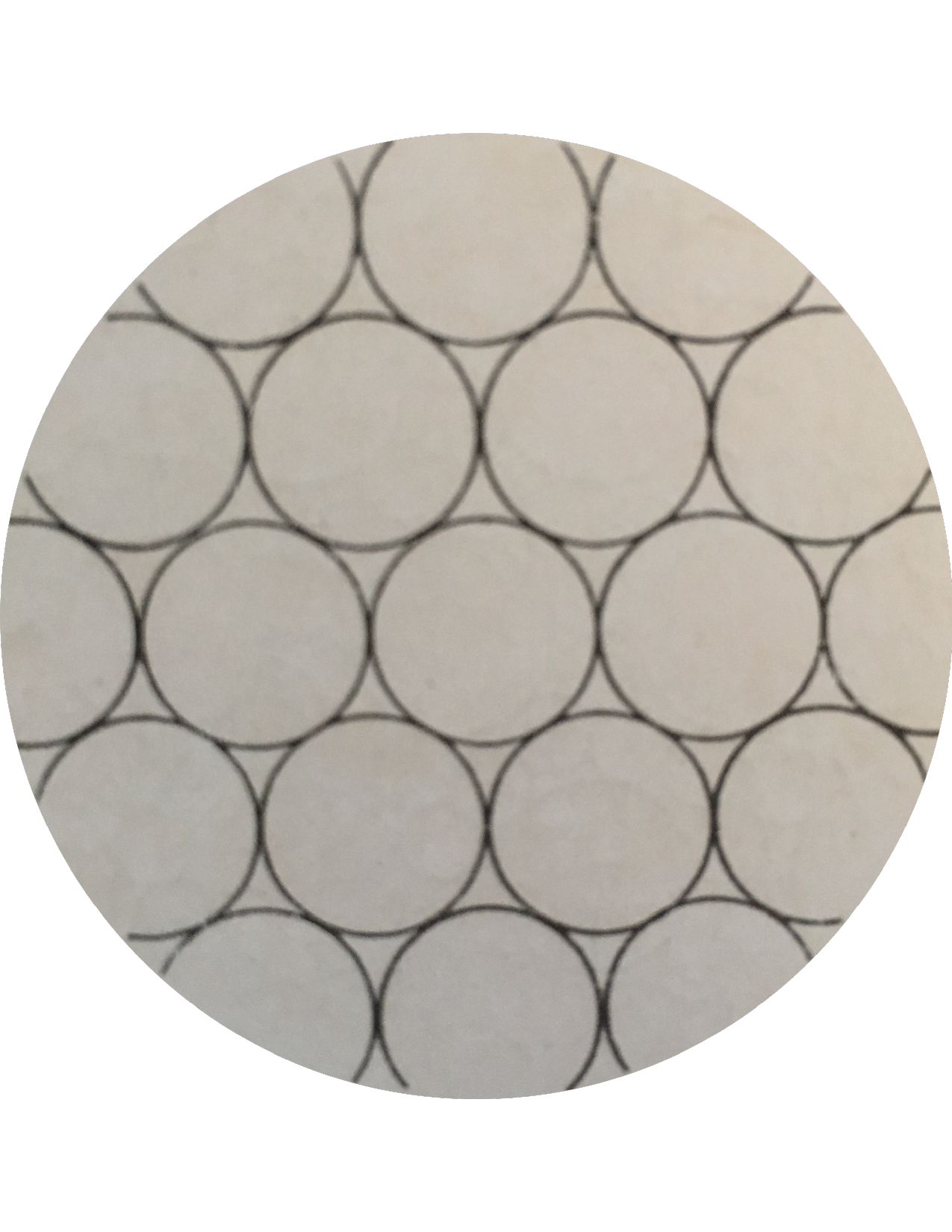}
\captionsetup{labelsep=colon,margin=1.5cm}
\caption{Fejes T\'oth's best guesses, photographed directly from a copy of \textit{Regular Figures}. (Left: $q\leq q_1$, Center: $q_1<q\leq q_2$, Right: $q_2<q\leq1$)\quad\cite{MR0165423}}\label{fig:LFT-figures}
\end{figure}

\section{Fernique's Packings} \label{sect:Fernique}

Packing number 53 in Fernique's list \cite{MR4292755} of compact/triangulated packings of three sizes of disks (which improves a previous estimate in \cite{MR4033125}), Figure \ref{fig:Connelly} left, is of special interest because it has the highest radius ratio of all triangulated/compact disk packings known, aside from the hexagonal lattice. More importantly, Packing 53 improves Fejes T\'oth's packing for the densest packing with radius ratio for $ q_0=0.6404568491\ldots<q \leq 0.6585340820\ldots$ as shown in Figure \ref{fig:graph}, where $q_0$ is such that $\delta_{FT}(q_0)=\delta_{53}$, 
and  $\delta_{53}=0.9093065016\ldots$ is the density of Packing 53 in Fernique's list, (Figure \ref{fig:Connelly}, left).


 Let $q_{53}=0.6510501858\ldots$ be the radius ratio of Packing $53$ (Figure \ref{fig:Connelly}, left).

 Define $\Delta(q)=\delta_{53}$ for $q_0<q\leq q_{53}$, the horizontal red line in Figure \ref{fig:graph}. Since  $\Delta(q)\geq\delta(q)$ for those values of $q$, Packing $53$ is an improvement on Fejes T\'oth's packing for this range.

 For $q_{53}<q<q_B$, Packing 53 is no longer a valid guess because one of the disks in Packing $53$ is smaller than $q$. However, it is possible to create a perturbed version of Packing 53 using Fejes T\'oth's technique in order to make an improved guess for some $q>q_{53}$. We will modify it by increasing the medium radius $p$ and the small radius $q$ according to the following constraint which is satisfied by the unperturbed packing (Appendix \ref{subsection:Fernique-packings}):
 
\begin{equation}\label{eqn:packing-53}
2p^4+(4q+3)p^3+(2q^2-2q+1)p^2-(5q^2+6q)p+q^2=0
\end{equation}

\noindent This is to ensure that the medium sized disks remain in contact with each other (Figure (\ref{fig:Connelly}), right). Using the quartic formula to solve for $p$, we can write the density of the perturbed packing entirely in terms of $q$ (Appendix \ref{subsection:Fernique-packings}):

\begin{equation}\label{eqn:Delta}
\Delta(q)=\frac{\pi(1+p^2+q^2)(p+q)^4(1+p)^2(1+q)^2}{32pq(1+p+q)\big(8p^2q^2-(p^2-6pq+q^2)(1+p+q-pq)\big)\sqrt{pq(1+p+q)}}
\end{equation}

Since the largest disk is normalized to have radius $1$, $q$ is the radius ratio of this packing. $\Delta(q_{53})=\delta_{53}$, and the function $\Delta$ strictly decreases for $q\ge q_{53}$.

\begin{theorem}\label{Thm:ratio-max}
 Let $q_{max}=0.6585340820\ldots$ be defined such that $\Delta(q_{max})=\pi/\sqrt{12}$. This is Fernique's limiting ratio in Figure \ref{fig:graph}.
For $q_{53}<q\leq q_{max}$, $\Delta(q)\geq\delta_{FT}(q)$. Therefore, the perturbed Packing 53 is an improvement on the guess of the hexagonal lattice in this range.
\end{theorem}

These packings and their densities are shown as the red line in Figure \ref{fig:graph}.

\begin{figure}[H]
\centering
\includegraphics[scale=0.31]{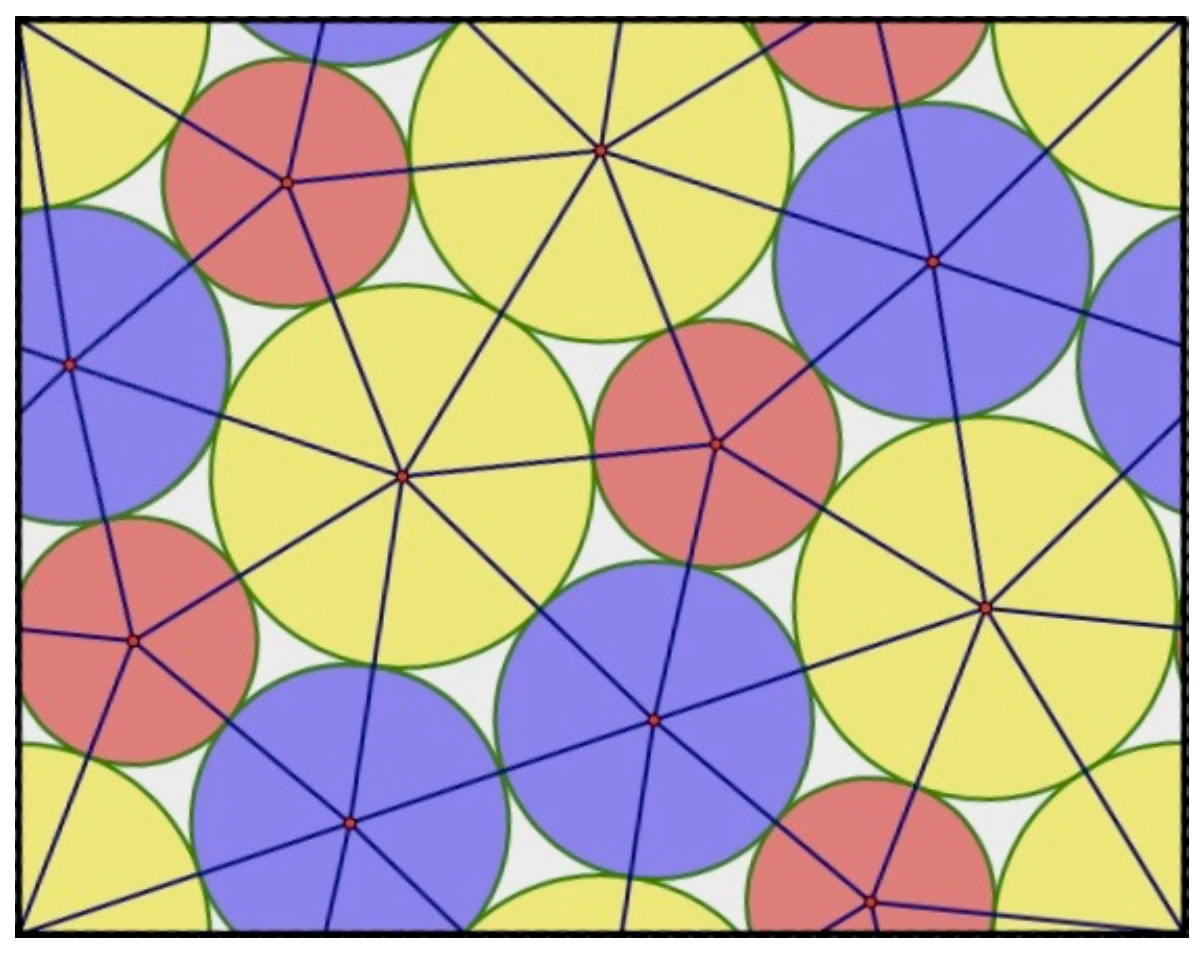}
\includegraphics[scale=0.3025]{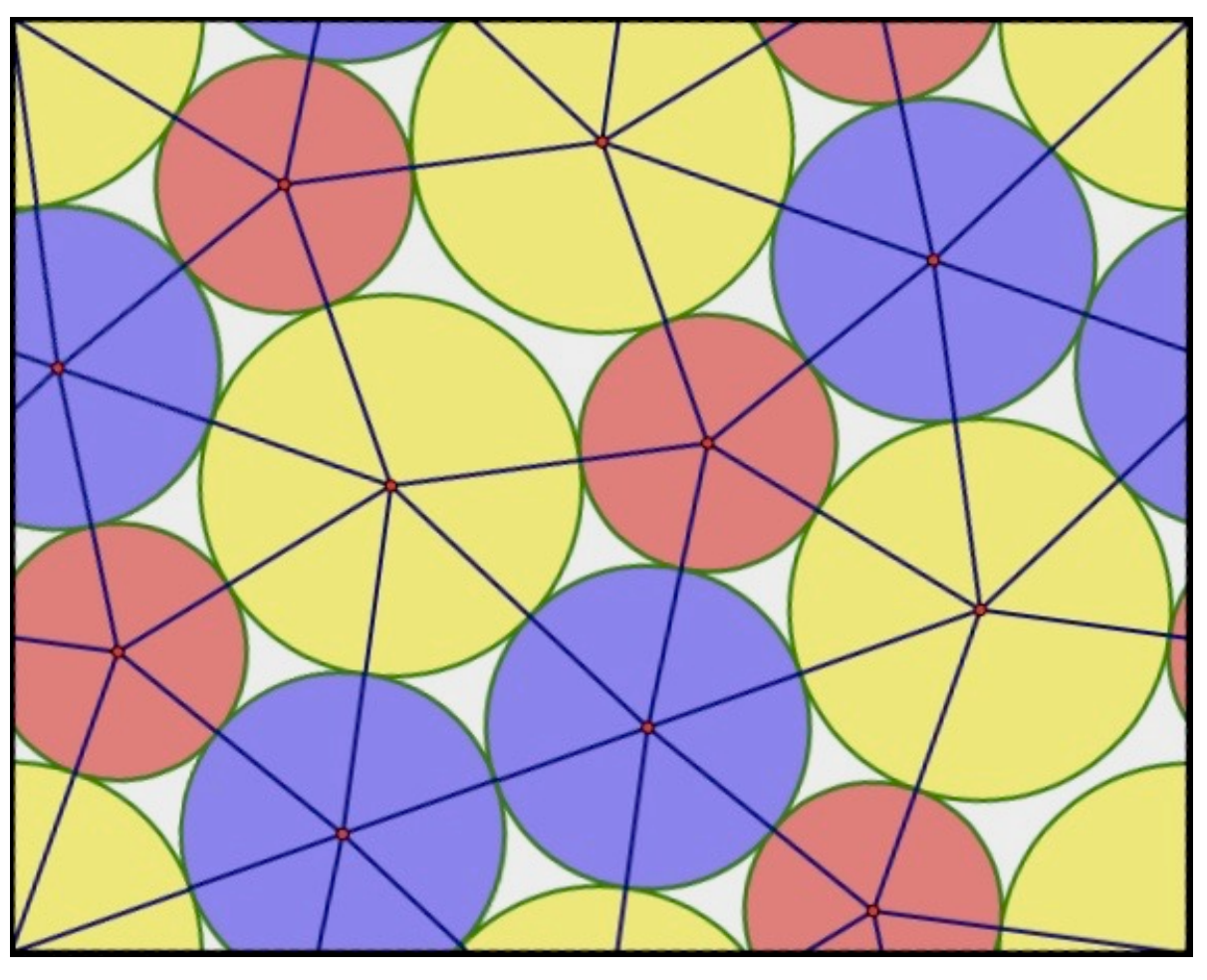}
\captionsetup{labelsep=colon,margin=1.0cm}
\caption{Packing 53 (left) and its perturbed version (right), along with their contact graphs. The rectangular borders of each packing are also their fundamental regions.\quad\label{fig:Connelly}}
\end{figure}

 For $q_{max}<q<q_B$, the perturbed packing is no longer a valid guess because $\Delta(q)$ goes below $\pi/\sqrt{12}$. More packings will need to be discovered and studied, if they exist, in order to make improvements in this range, although the packing in Figure \ref{fig:Connelly}, on the right, may be the limit of those packings that have density at least $\pi/\sqrt{12}$.  Putting this together we propose the following conjectures:
 \begin{conjecture} For any triangulated disk packing in the plane, Fernique's Packing 53 (Figure 4.1, above left) has the largest radius ratio less than $1$.
 
 Furthermore, for any disk packing in the plane with density greater than $\pi/\sqrt{12}$, its radius ratio is less than that of the perturbed Fernique packing (Figure 4.1, above right), which is $q_{max}=0.6585340820\ldots$ from Theorem \ref{Thm:ratio-max}.
 \end{conjecture}
 
 With triangulated packings with three sizes of disks, we know from Fernique's work that his Packing 53 has the largest radius ratio less than $1$, and it seems unlikely that having more sizes of disks allows the radius ratio to be larger. It also seems that triangulated packings are a good first approximation for any packing to have a density greater than $\pi/\sqrt{12}$, and among other deformations of Fernique's packing found, the packing in Figure 4.1 right above is the best.

\section{Appendix}
\subsection{Fejes T\'oth's Packings}
This is a derivation of Formula \ref{egn:delta} for L. Fejes T\'oth's middle packing in Figure \ref{fig:LFT-figures}.  Figure \ref{fig:LFT-displaced} shows a displaced version of L. Fejes T\'oth's packing, concentrating on the upper right trapezoid, which is a fourth of the whole fundamental region and which reflects into the whole fundamental region.  The coordinates of the vertices of the trapezoid are shown.

\begin{figure}[H]
\centering
\captionsetup{labelsep=colon,margin=1.5cm}
\includegraphics[scale=0.6]{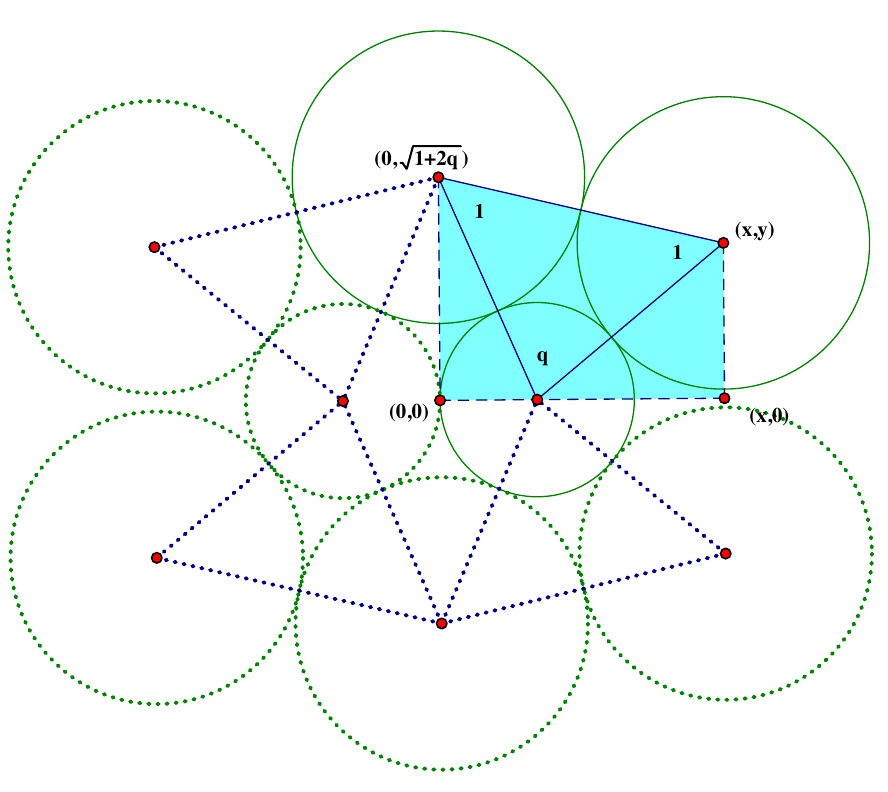}
\caption{Diagram of the fundamental region of L. Fejes T\'oth's packing, with the corner trapezoid, constituting one-fourth of the fundamental region.} \label{fig:LFT-displaced}
\end{figure}

There are two circles of radius $1$, and two circles of radius $q$ per fundamental region in this Figure \ref{fig:LFT-displaced}.
The following equations show the edge length constraints:

\begin{eqnarray*}
|(x,y)-(0,\sqrt{1+2q})|&=& 2\\
|(x,y)-(q,0)|&=& 1+q,
\end{eqnarray*}
which translates to $x^2 +y^2=2y\sqrt{1+2q}-(1+2q)+4=2xq+1+2q$.

\begin{eqnarray*}
x^2+(y-\sqrt{1+2q})^2&=& 4\\
(x-q)^2+y^2&=& (1+q)^2.
    \end{eqnarray*}

So we get:    
\begin{eqnarray*}
x^2 +y^2=2y\sqrt{1+2q}-(1+2q)+4=2xq+1+2q.
\end{eqnarray*}

Solving these two equations for $x$ and $y$ in terms of $q$ we get:
\begin{eqnarray*}
x&=&2\cdot\frac{q+\sqrt{2q^3+5q^2+2q}}{(q+1)^2}\\
y&=&\frac{2q^3+7q^2+4q\sqrt{2q^3+5q^2+2q}-1}{\sqrt{1+2q}(q+1)^2}
\end{eqnarray*}

Then the density of this packing in terms of $q$ is 

\begin{eqnarray*}
\delta=\delta(q)=\frac{\pi(q^2+1)}{(\sqrt{1+2q}+y(q))x(q)}.
\end{eqnarray*}

Note that $y(0.6375559772\ldots)=1$ which means that the configuration is as in Fejes T\'oth's Figure \ref{fig:LFT-figures} on the left, and Kennedy's Figure 1 of Figure \ref{fig:Kennedy-9}, the triangulated packing. In Section \ref{sect:53} $q_1=0.6375559772\ldots$.  Note also that when the ratio $q=1$, $y(1)=\sqrt{3}$, and $x(1)=2$, showing that the configuration is the ordinary hexagonal packing, where  two radius $q$ disks come together and touch.  Note that $\delta(0.6375559772\ldots)=0.9106832003\ldots > \pi/\sqrt{12}$.
When the $q$ disk radius is expanded to $q=0.6457072159\ldots=q_2$, then $\delta(q_2)=\pi/\sqrt{12}$.  So $q_2=0.6457072159\ldots$ is the limit of the largest $q$ radius for Fejes T\'oth's packings.

$q_1=0.6375559772\ldots$ is the root of the following polynomial:
$$x^4-10x^2-8x+9=0\qquad\cite{MR2195054}$$

$q_2=0.6457072159\dots$ is the root of the following polynomial:
$$9x^{15}+81x^{14}+369x^{13}+1161x^{12}+2757x^{11}+4749x^{10}+5805x^9+5445x^8$$
$$+3643x^7+1235x^6+243x^5-1029x^4-969x^3-369x^2-81x-9=0$$

This was calculated by solving $\delta_{FT}(q)=\pi/\sqrt{12}$ in Wolfram Alpha.

\subsection{Fernique's Packings} \label{subsection:Fernique-packings}
 
 For the evaluation of $q_{53}$ and $q_{max}$ we have the following:
 
 $q_{53}=0.6510501858\ldots$ is the root of the following polynomial:
\[
89x^8+1344x^7+4008x^6-464x^5-2410x^4+176x^3+296x^2-96x+1=0\qquad\cite{MR4292755}
\]
$q_{max}=0.6585340820\ldots$ is the root of the following polynomial:
\begin{multline*}
82944x^{31}+2073600x^{30}+25449984x^{29}+204553728x^{28}+1214611776x^{27}+5674077504x^{26}\\
+21595717440x^{25}+68441069376x^{24}+183725780496x^{23}+423619513104x^{22}+846900183408x^{21}\\
+1474917242352x^{20}+2239664278028x^{19}+2959314640332x^{18}+3384242724844x^{17}\\
+3313803241196x^{16}+2719452571159x^{15}+1783910866439x^{14}+815514300847x^{13}+88889109343x^{12}\\
-279883089565x^{11}-346836129933x^{10}-256274678853x^9-138435598005x^8-57157331979x^7\\
-18283967739x^6-4571655651x^5-892845459x^4-132201675x^3-14152347x^2-985635x-33075=0
\end{multline*}
This was calculated by solving $\Delta(q)=\pi/\sqrt{12}$ in Wolfram Alpha.
 \bigskip
 
 For the calculation of the density $\Delta(q)$ of the perturbed (and unperturbed) Fernique packing $53$, we consider the following portion of the packing as follows:
 
\begin{figure}[H]
\centering
\captionsetup{labelsep=colon,margin=1.5cm}
\includegraphics[scale=0.6]{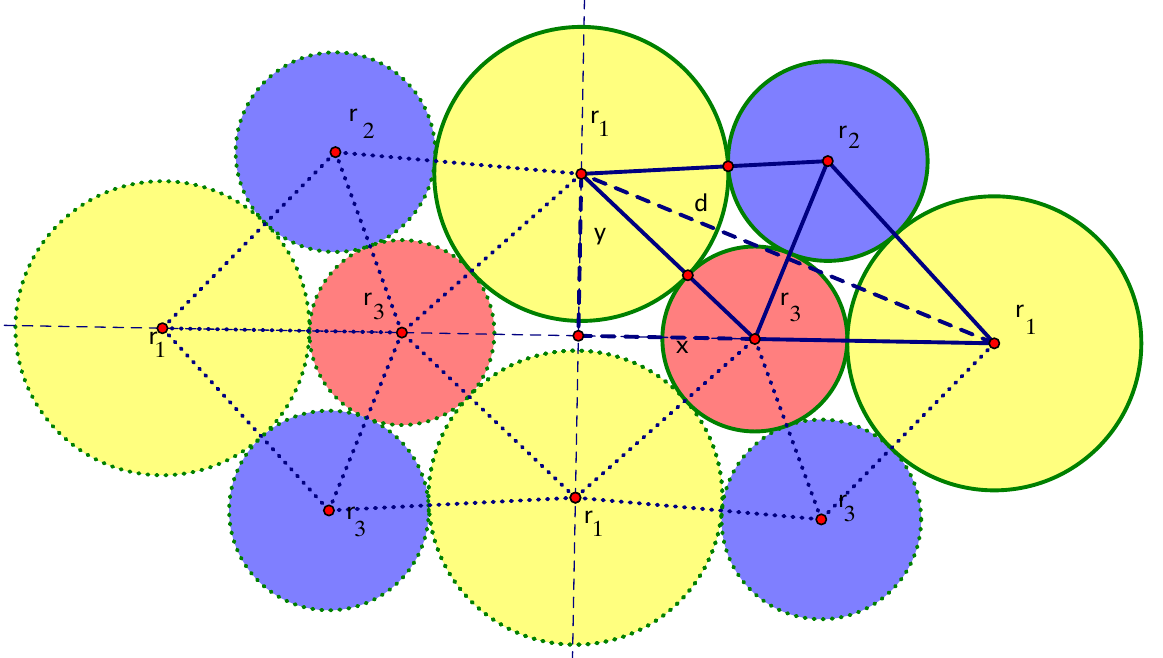}
\caption{Diagram of a portion of the perturbed Fernique packing.  The center point is surrounded symmetrically by four circles with radii, $r_1, r_3, r_1, r_3$ in order with the two $r_1$ disks moved apart slightly.} \label{fig:Fernique-local}
\end{figure}

Fernique's Packing 53 has $180^{\circ}$ rotational symmetry about every  contact point between pairs of large (yellow here) disks and pairs of medium (blue here) disks. Preserving that symmetry yields the periodic packing in Figure \ref{fig:Connelly}, right. In Figure \ref{fig:Fernique-local}, the packing is rotated so that it is easier to see the $180^{\circ}$ rotational symmetry about the midpoint between the yellow disk centers.

Figure \ref{fig:Fernique-pgg} shows how the $10$ disks of Figure \ref{fig:Fernique-local} fit within the fundamental region of Fernique's Packing $53$. The rectangular fundamental region is tiled by $4$ smaller rectangles, each of which is symmetric to its neighbors by a glide reflection either horizontally or vertically. Each glide reflection is a translation along one of the dashed lines followed by a reflection over the same dashed line.
\begin{figure}[H]
\centering
\captionsetup{labelsep=colon,margin=1.5cm}
\includegraphics[scale=0.6]{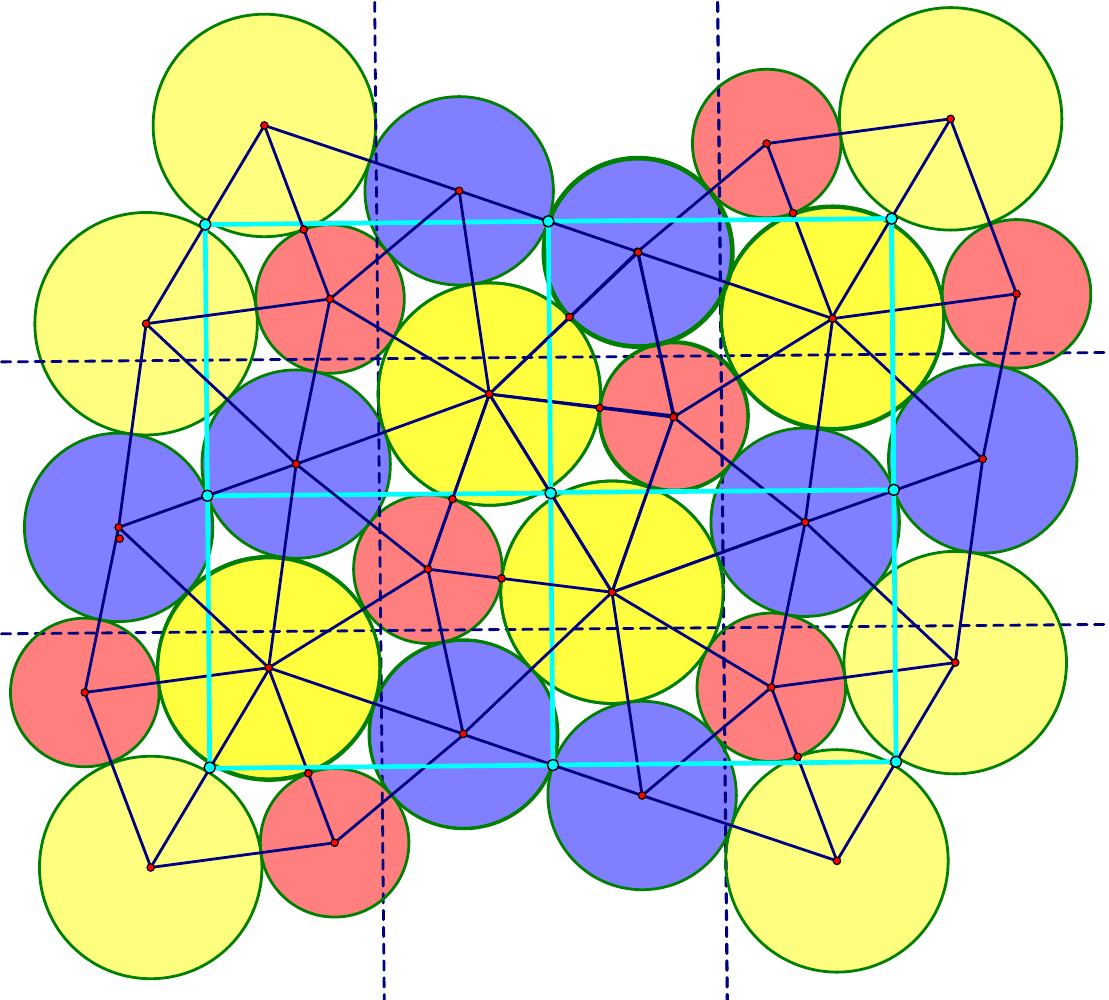}
\caption{Diagram of a fundamental region of the perturbed Fernique packing. The fundamental region is divided into $4$ light blue rectangles. This packing has pgg symmetry, one of the $17$ wallpaper symmetry groups. See \cite{MR992195} for a description of the wallpaper groups.} \label{fig:Fernique-pgg}
\end{figure}

The distance from the $r_1$ disks to the  center of symmetry is defined to be $y$, and the distance from the center of the $r_3$ disk to the center of symmetry is $x$.  The distance $d$ is the distance between the centers of the $r_1$ disks and is perpendicular to the line through the $r_2, r_3$ disk centers as shown.
As before the area of the triangle formed by the centers of the $r_1, r_2, r_3$ disks is
\begin{eqnarray*}
A_{\Delta}=\sqrt{r_1 r_2 r_3(r_1+r_2+r_3)}.
\end{eqnarray*}
The distance $d$ can be calculated because of the symmetry about the line through the centers of the $r_2$ and $r_3$ disks.

\begin{eqnarray*}
\frac{d}{2}(r_2+r_3)\frac{1}{2}=A_{\Delta}.
\end{eqnarray*}
So $d$ can be calculated in terms of the radii.

\begin{eqnarray*}
d^2=\frac{16r_1 r_2 r_3(r_1+r_2+r_3)}{(r_2+r_3)^2}.
\end{eqnarray*}
Then using the right triangle formed by the two $r_1$ circle centers and the center of symmetry,
\begin{eqnarray}\label{eqn:d-distance}
(x+r_1+r_3)^2+y^2=d^2=\frac{16r_1 r_2 r_3(r_1+r_2+r_3)}{(r_2+r_3)^2}.
\end{eqnarray}
Similarly using the using the right triangle formed by the $r_1, r_3$ circle centers and the center of symmetry,
\begin{eqnarray}\label{eqn:x-y-relation}
x^2+y^2=(r_1+r_3)^2.
\end{eqnarray}
Substituting this into Equation (\ref{eqn:d-distance}) we get:
\begin{eqnarray*}
2(r_1+r_3)^2 +2x(r_1+r_3)=\frac{16r_1 r_2 r_3(r_1+r_2+r_3)}{(r_2+r_3)^2}.
\end{eqnarray*}
Solving for $x$ as the following explicit rational function of the radii, we get:
\begin{eqnarray}\label{eqn:x-function}
x(r_1,r_2,r_3)=\frac{8r_1 r_2 r_3(r_1+r_2+r_3)}{(r_2+r_3)^2(r_1+r_3)}-(r_1+r_3).
\end{eqnarray}
Using Equation (\ref{eqn:x-y-relation}) we find $y$ as a function of the radii:
\begin{eqnarray}\label{eqn:y-function}
y(r_1,r_2,r_3)=\sqrt{(r_1+r_3)^2-x(r_1,r_2,r_3)^2}.
\end{eqnarray}

Note that to get a packing (i.e. without overlap) we must have $y\ge r_1$ and $x\ge r_3$, and we can switch the roles of $r_2$ and $r_3$ with the disks switching in Figure \ref{fig:Fernique-local}.  Indeed as the yellow disks move apart and the red disks move together and touch, the packing deforms to the standard hexagonal packing.

Assume that $r_1=1, r_2=p, r_3=q$, and $q<p<1$.  From Figure (\ref{fig:Connelly}) we see that each fundamental region has $4$ disks of each size, so the total area covered by the disks is:

\begin{equation*}
    A(p,q)=4\pi(1+p^2+q^2).
\end{equation*}

This formula is independent of the relative sizes of the disks.

We know that the area of one of the triangles determined by the centers of $3$ different disks is:

\begin{equation*}
    A_{\Delta}(p,q)=\sqrt{pq(1+p+q)}.
\end{equation*}

The area of a quadrilateral (a rhombus) determined by $2$ yellow disk centers and $2$ red disk centers in Figure \ref{fig:Fernique-local}  is:
\begin{equation*}
QUAD(p,q)=2x(1,p,q)y(1,p,q).
\end{equation*}

And the area of a quadrilateral (a rhombus) determined by $2$ yellow disk centers and $2$ blue disk centers in Figure \ref{fig:Fernique-local}  is:
\begin{equation*}
QUAD(q,p)=2x(1,q,p)y(1,q,p).
\end{equation*}

The equation for the constraint on how the sizes of the blue and red disks are increased $(4.1)$ is determined by equating the above expression for the area of the yellow-blue rhombus to a more trivial calculation of the area which assumes that the blue disks are in contact. As a result, this constraint will ensure that the blue disks are always in contact no matter how the packing is perturbed.
\begin{equation*}
2x(1,q,p)y(1,q,p)=\frac{1}{2}\cdot2p\cdot2\sqrt{2p+1}
\end{equation*}

After simplifying the factors of $2$, we obtain
\begin{equation*}
x(1,q,p)y(1,q,p)=p\sqrt{2p+1}.
\end{equation*}

Then, we calculate the value of $x(1,q,p)$.
\begin{equation*}
x(1,q,p)=\frac{8qp(1+q+p)}{(q+p)^2(1+p)}-(1+p)
\end{equation*}
\begin{equation*}
x(1,q,p)=\frac{8qp(1+q+p)-(q+p)^2(1+p)^2}{(q+p)^2(1+p)}
\end{equation*}

Next, we use this expression to calculate the value of $y(1,q,p)$.
\begin{equation*}
y(1,q,p)=\sqrt{(1+p)^2-x(1,q,p)^2}
\end{equation*}
\begin{equation*}
y(1,q,p)=\sqrt{(1+p)^2-\bigg(\frac{8qp(1+q+p)}{(q+p)^2(1+p)}-(1+p)\bigg)^2}
\end{equation*}
\begin{equation*}
y(1,q,p)=\sqrt{\frac{16qp(1+q+p)}{(q+p)^2}-\frac{64q^2p^2(1+q+p)^2}{(q+p)^4(1+p)^2}}
\end{equation*}
\begin{equation*}
y(1,q,p)=\sqrt{\frac{16qp(1+q+p)(q+p)^2(1+p)^2-64q^2p^2(1+q+p)^2}{(q+p)^4(1+p)^2}}
\end{equation*}
\begin{equation*}
y(1,q,p)=\sqrt{\frac{16qp(1+q+p)(p^2+qp-q+p)^2}{(q+p)^4(1+p)^2}}
\end{equation*}

Now, we substitute $x$ and $y$ into the area equality.
\begin{equation*}
\frac{8qp(1+q+p)-(q+p)^2(1+p)^2}{(q+p)^2(1+p)}\sqrt{\frac{16qp(1+q+p)(p^2+qp-q+p)^2}{(q+p)^4(1+p)^2}}=p\sqrt{2p+1}
\end{equation*}

After squaring both sides, we obtain
\begin{equation*}
\frac{\big(8qp(1+q+p)-(q+p)^2(1+p)^2\big)^2}{(q+p)^4(1+p)^2}\cdot\frac{16qp(1+q+p)(p^2+qp-q+p)^2}{(q+p)^4(1+p)^2}=p^2(2p+1).
\end{equation*}

Now, multiply both sides of the equation by $(q+p)^8(1+p)^4$.
\begin{equation*}
\big(8qp(1+q+p)-(q+p)^2(1+p)^2\big)^2\cdot16qp(1+q+p)(p^2+qp-q+p)^2=p^2(2p+1)(q+p)^8(1+p)^4
\end{equation*}

Next, bring all of the terms to one side.
\begin{equation*}
p^2(2p+1)(q+p)^8(1+p)^4-16qp(1+q+p)(p^2+qp-q+p)^2\big(8qp(1+q+p)-(q+p)^2(1+p)^2\big)^2=0
\end{equation*}

We can use Wolfram Alpha to expand and factor this polynomial.
\begin{equation*}
p(p^3-6p^2q+p^2-7pq^2-6pq+q^2)(2p^4+4p^3q+3p^3+2p^2q^2-2p^2q+p^2-5pq^2-6pq+q^2)
\end{equation*}
\begin{equation*}
(p^7+4p^6q+2p^6+6p^5q^2-8p^5q+p^5+4p^4q^3-36p^4q^2-44p^4q+p^3q^4-40p^3q^3-58p^3q^2
\end{equation*}
\begin{equation*}
-48p^3q-14p^2q^4+20p^2q^3+16p^2q^2-16p^2q+33pq^4+48pq^3+32pq^2-16q^4-16q^3)=0
\end{equation*}

Since the perturbation is continuous, the constraint must also be satisfied by the radii of the unperturbed packing. This is only the case for the third factor, and so it can be considered in isolation.
\begin{equation*}
2p^4+4p^3q+3p^3+2p^2q^2-2p^2q+p^2-5pq^2-6pq+q^2=0
\end{equation*}

Lastly, we can factor out identical powers of $p$.
\begin{equation*}
2p^4+(4q+3)p^3+(2q^2-2q+1)p^2-(5q^2+6q)p+q^2=0
\end{equation*}

From Figure (\ref{fig:Connelly}), left that there are $4$ triangles determined by $2$ yellow disks and a red disk corresponding to $2$ QUAD regions.  There are  $4$ triangles determined by $2$ yellow disks and $2$ blue disks corresponding to $2$ other QUAD regions.  Then there are $16$ triangles determined by $3$ different disks. Putting all these regions together we see that the total area of the torus is:
\begin{equation*}
A_T(p,q)=16A_{\Delta}(p,q)+2QUAD(p,q)+2QUAD(q,p).
\end{equation*}

Altogether, we get that the overall density of the packing is:
\begin{equation*}
\Delta(p,q)=\frac{A(p,q)}{A_T(p,q)}.
\end{equation*}

With the constraint equation (4.1), this yields the $\Delta(q)$ given in $(4.2)$.

\newpage
\bibliographystyle{plain}
\bibliography{references}

\begin{thebibliography}{1}

\bibitem{MR0275291}
Gerd Blind.
\newblock \"{U}ber {U}nterdeckungen der {E}bene durch {K}reise.
\newblock {\em J. Reine Angew. Math.}, 236:145--173, 1969.

\bibitem{MR0377702}
Gerd Blind.
\newblock Unterdeckung der {E}bene durch inkongruente {K}reise.
\newblock {\em Arch. Math. (Basel)}, 26(4):441--448, 1975.

\bibitem{MR0165423}
L.~Fejes~T\'{o}th.
\newblock {\em Regular Figures}.
\newblock A Pergamon Press Book. The Macmillan Co., New York, 1964.

\bibitem{MR4292755}
Thomas Fernique, Amir Hashemi, and Olga Sizova.
\newblock Compact {P}ackings of the {P}lane with {T}hree {S}izes of {D}iscs.
\newblock {\em Discrete Comput. Geom.}, 66(2):613--635, 2021.

\bibitem{MR0155235}
August Florian.
\newblock Ausf\"{u}llung der {E}bene durch {K}reise.
\newblock {\em Rend. Circ. Mat. Palermo (2)}, 9:300--312, 1960.

\bibitem{MR992195}
Branko Gr\"{u}nbaum and G.~C. Shephard.
\newblock {\em Tilings and Patterns}.
\newblock A Series of Books in the Mathematical Sciences. W. H. Freeman and
  Company, New York, 1989.
\newblock An introduction.

\bibitem{MR2007963}
Alad\'{a}r Heppes.
\newblock Some densest two-size disc packings in the plane.
\newblock {\em Discrete Comput. Geom.}, 30(2):241--262, 2003.
\newblock U.S.-Hungarian Workshops on Discrete Geometry and Convexity
  (Budapest, 1999/Auburn, AL, 2000).

\bibitem{MR2195054}
Tom Kennedy.
\newblock Compact packings of the plane with two sizes of discs.
\newblock {\em Discrete Comput. Geom.}, 35(2):255--267, 2006.

\bibitem{MR4033125}
Miek Messerschmidt.
\newblock On compact packings of the plane with circles of three radii.
\newblock {\em Comput. Geom.}, 86:101564, 17, 2020.

\end{thebibliography}
\end{document}